\documentclass{amsart}

\usepackage[T1]{fontenc}
\usepackage{amssymb,mathtools}
\usepackage[alphabetic,nobysame]{amsrefs}
\usepackage{hyperref}

\theoremstyle{definition}
\newtheorem{defi}{Definition}[section]
\newtheorem{rem}[defi]{Remark}
\theoremstyle{plain}
\newtheorem{thm}[defi]{Theorem}
\newtheorem{prop}[defi]{Proposition}
\newtheorem{lem}[defi]{Lemma}
\newtheorem{cor}[defi]{Corollary}

\newcommand{\spec}{\operatorname{Spec}}
\newcommand{\supp}{\operatorname{Supp}}

\newcommand{\mdepth}{\operatorname{depth}}
\newcommand{\RGamma}{\mathbf{R}\Gamma}

\newcommand{\ideal}[1]{\mathfrak{#1}}

\renewcommand{\hom}{\operatorname{Hom}}

\newcommand{\ajRoman}[1]{\MakeUppercase{\romannumeral #1}}

\makeatletter
\newcommand{\ilim}[1][\relax]{%
  \def\ilim@chk{#1}%
  \@xp\ifx\ilim@chk\relax
    \varinjlim
  \else
    \mathop{\varinjlim}\limits_{#1}%
  \fi}
\newcommand{\plim}[1][\relax]{%
  \def\plim@chk{#1}%
  \@xp\ifx\plim@chk\relax
    \varprojlim
  \else
    \mathop{\varprojlim}\limits_{#1}%
  \fi}
\makeatother

\title{Cohomological Cohen--Macaulayness in Non-Noetherian Rings}
\author{Ryoya Ando}
\address{Beyont Ltd., Chiyoda, Japan.}
\email{r\_ando@skillupai.com}
\email{6122701@alumni.tus.ac.jp}
\date{}
\subjclass[2020]{13H10 (Primary), 13D45, 13A35 (Secondary).}
\keywords{non-Noetherian Cohen--Macaulay ring, cohomology with support, big Cohen--Macaulay algebra}

\begin{document}

\begin{abstract}
  We study Cohen--Macaulayness, in the sense of Hamilton--Marley, of non-Noetherian rings arising as big
  Cohen--Macaulay algebras. Motivated by Bhatt's notion of cohomological Cohen--Macaulayness, we call a
  locally finite-dimensional ring CCM if its structure sheaf satisfies this condition. Our main comparison
  theorem shows that every locally finite-dimensional CCM ring is locally HMCM\@. Using this theorem, we prove
  that if $A$ is Noetherian and $R$ is an integral $A$-algebra that is locally balanced big Cohen--Macaulay
  over $A$, then $R$ is CCM and hence locally HMCM\@. In particular, if $A$ is an excellent Noetherian domain,
  $p$ is a prime, $n\geq1$, and $A/pA\neq0$, then $A^+/p^nA^+$ is CCM and locally HMCM\@. We also show, using
  finite-dimensional valuation domains, that CCM is strictly stronger than locally HMCM\@. Finally, for a
  Noetherian ring $A$ of characteristic $p>0$ and its perfection $A_{\mathrm{perf}}$, we prove that the
  following conditions are equivalent: $A$ is locally weakly $F$-nilpotent; $A_{\mathrm{perf}}$ is a locally
  balanced big Cohen--Macaulay $A$-algebra; and $A_{\mathrm{perf}}$ is CCM\@. Under these equivalent
  conditions, $A_{\mathrm{perf}}$ is locally HMCM\@.
\end{abstract}

\maketitle

\section{Introduction}

For a Noetherian local ring, depth, regularity of systems of parameters, and the vanishing of lower local
cohomology are equivalent characterizations of Cohen--Macaulayness. Once the Noetherian hypothesis is
removed, however, regular sequences, Koszul and \v{C}ech cohomology, module-theoretic local cohomology, and
sheaf-theoretic cohomology with support may behave differently, and these characterizations are no longer
equivalent in general. Accordingly, several notions of Cohen--Macaulayness for non-Noetherian rings have been
proposed by extending different classical characterizations. Hamilton--Marley introduced a definition based
on parameter sequences and \v{C}ech cohomology \cite{Hamilton-Marley2007}, while Asgharzadeh--Tousi compared
several non-Noetherian Cohen--Macaulay conditions, including those based on Koszul grade and height and weak
Bourbaki unmixedness \cite{Asgharzadeh-Tousi2009}. Throughout this paper, we call the Hamilton--Marley notion
of Cohen--Macaulayness (Definition~\ref{def:HMCM}) HMCM\@.

One motivation for this work comes from big Cohen--Macaulay algebras. Big Cohen--Macaulayness is defined
relative to a Noetherian base ring, while such algebras are often non-Noetherian. It is natural to ask
whether such an algebra satisfies a non-Noetherian Cohen--Macaulay condition as a ring. If $A$ is an
excellent Noetherian domain of characteristic $p>0$, Hochster--Huneke proved that $A^+$ is a balanced big
Cohen--Macaulay $A$-algebra \cite{Hochster-Huneke1992}. Hamilton--Marley further showed that $A^+$ is HMCM
\cite{Hamilton-Marley2007}*{Theorem~4.11}, and Asgharzadeh--Tousi established several other non-Noetherian
Cohen--Macaulay properties for absolute integral closures in positive characteristic
\cite{Asgharzadeh-Tousi2009}*{Theorem~4.4}. For this purpose, we use the following cohomological condition.

Bhatt \cite{Bhatt2021}*{Definition~2.1} introduced cohomological Cohen--Macaulayness for complexes on
finite-dimensional schemes in terms of cohomology with support at points. The same local vanishing condition
appears for modules over Noetherian local rings in Bhatt--Ma--Patakfalvi--Schwede--Tucker--Waldron--Witaszek
\cite{BMPSTWW2023}*{Discussion~2.2}, and for topologically Noetherian schemes in Cass--Louren\c{c}o
\cite{Cass-Lourenco2025}*{Definition~2.9}. For a ring $A$ whose local rings are all finite-dimensional, we
say that $A$ is \emph{cohomologically Cohen--Macaulay in the sense of Bhatt}, abbreviated CCM, if for every
$P\in\spec A$,
\[
  H^{i}_{PA_P}(\spec A_P,\mathcal O)=0\qquad(i<\dim A_P).
\]
This condition is local by definition and agrees with the usual Cohen--Macaulay property for Noetherian
rings.

We first prove the following comparison theorem.

\begin{thm}[Theorem A]\label{thm:intro-main}
  Let $A$ be a locally finite-dimensional CCM ring. Then $A$ is locally HMCM\@. More precisely, for
  every $P\in\spec A$ and every parameter sequence $\underline{a}$ of length $r$ in $A_P$, we have $\check{H}^i(\underline{a},A_P)=0$ for every $i<r$.
\end{thm}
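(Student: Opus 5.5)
The plan is to identify the Čech cohomology $\check{H}^i(\underline{a},A_P)$ with sheaf cohomology with support, and then induct on $\dim A_P$ using an excision triangle that separates off the closed point.

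\emph{Step 1: reduction to cohomology with support.} Put $X=\spec A_P$, $Z=V(\underline{a})\subseteq X$ and $W=X\setminus Z=\bigcup_j D(a_j)$. The cover $\{D(a_j)\}$ of $W$ is a finite affine cover, and all of its finite intersections are affine. Hence the Čech complex computes $R\Gamma(W,\mathcal O)$ for an arbitrary, possibly non-Noetherian, ring. The triangle $R\Gamma_Z(X,\mathcal O)\to R\Gamma(X,\mathcal O)\to R\Gamma(W,\mathcal O)$ then gives
\[
  \check{H}^i(\underline{a},A_P)\cong H^i_Z(X,\mathcal O)\qquad\text{for all } i .
\]
The same identification holds after localizing at any prime $Q\supseteq(\underline{a})$.

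\emph{Step 2: dimension bound.} I would show that $\dim A_Q\ge r$ for every prime $Q\supseteq(\underline{a})$. By the Hamilton--Marley definition of a parameter sequence, $\check{H}^r(\underline{a},A_P)_Q\ne0$, so $H^r_{V(\underline a)}(\spec A_Q,\mathcal O)\ne0$. Grothendieck vanishing holds on spectral spaces of Krull dimension $d$ (Scheiderer's version); since cohomology with support is sandwiched between ordinary cohomology groups, it vanishes above $d$ as well. This forces $r\le\dim A_Q$. I would also check, using Hamilton--Marley's results, that the image of $\underline{a}$ in $A_Q$ is again a parameter sequence. This should be automatic: weak proregularity localizes, and the nonvanishing condition is stated prime by prime.

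\emph{Step 3: induction on $d=\dim A_P$.} The case $d=0$ is trivial, since then $r=0$. For $d>0$, let $\mathfrak m=PA_P$ be the closed point and $U=X\setminus\{\mathfrak m\}$. Excision gives the triangle
\[
  R\Gamma_{\{\mathfrak m\}}(X,\mathcal O)\to R\Gamma_Z(X,\mathcal O)\to R\Gamma_{Z\cap U}(U,\mathcal O).
\]
The CCM hypothesis gives $H^i_{\{\mathfrak m\}}=0$ for $i<d$, and $d\ge r$ by Step 2. So it suffices to show $H^i_{Z\cap U}(U,\mathcal O)=0$ for $i<r$.

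For this I use the local-to-global spectral sequence $H^p(U,\mathcal H^q_{Z\cap U}(\mathcal O))\Rightarrow H^{p+q}_{Z\cap U}(U,\mathcal O)$. The stalk of $\mathcal H^q_{Z\cap U}(\mathcal O)$ at $Q\in U$ is computed by a filtered colimit over affine neighbourhoods, which is legitimate because the Čech description commutes with localization. That stalk equals $\check{H}^q(\underline{a},A_Q)$ if $Q\supseteq(\underline{a})$, and $0$ otherwise. In the first case $\dim A_Q<d$, $A_Q$ is again CCM, and $\underline{a}$ is a parameter sequence in $A_Q$, so the induction hypothesis gives vanishing for $q<r$. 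Hence $\mathcal H^q_{Z\cap U}(\mathcal O)=0$ for $q<r$, and the spectral sequence yields $H^i_{Z\cap U}(U,\mathcal O)=0$ for $i<r$. The long exact sequence then gives $\check{H}^i(\underline{a},A_P)=0$ for $i<r$.

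Local HMCM-ness follows from Hamilton--Marley's characterization of Cohen--Macaulayness through the vanishing of Čech cohomology below the length for (strong) parameter sequences.

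The main obstacle I expect is Step 2 together with the localization claims: making Grothendieck vanishing and the stalk computation rigorous without Noetherian hypotheses. If Scheiderer's vanishing theorem is not available, I would try to extract $r\le\dim A_Q$ directly from the Hamilton--Marley theory of parameter sequences.
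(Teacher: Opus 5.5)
Your proof is correct, but it is organized differently from the paper's. The paper avoids induction.

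\begin{itemize}
\item In the local ring it takes the least $i\le r$ with $\check{H}^i(\underline{a},A)\neq0$, and a minimal prime $P$ of the support of this module. Choosing $P$ is the only use of finite dimension.
\item Over $X=\spec A_P$, the sheaves $\mathcal{H}^q_Z(\mathcal{O}_X)$ vanish for $q<i$ by the choice of $i$. By minimality of $P$, $\mathcal{H}^i_Z(\mathcal{O}_X)$ is concentrated at the closed point.
\item The spectral sequence $H^p_{PA_P}(X,\mathcal{H}^q_Z(\mathcal{O}_X))\Rightarrow H^{p+q}_{PA_P}(X,\mathcal{O}_X)$ then gives $H^i_{PA_P}(X,\mathcal{O}_X)\cong\check{H}^i(\underline{a},A)_P\neq0$.
\item CCM forces $i\ge\dim A_P$, and Hamilton--Marley's height inequality gives $\dim A_P\ge r$.
\end{itemize}

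You run the argument in the opposite direction. You first prove vanishing on the punctured spectrum, by induction through the local-to-global spectral sequence. You then pass to $X$ through the excision triangle for $\{\mathfrak{m}\}\subseteq Z$.

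The paper's minimal-counterexample argument is more economical. Because the lower vanishing holds globally by the choice of $i$, it never needs the image of $\underline{a}$ in $A_Q$ to be a parameter sequence. It also needs only that a support have a minimal element, which is what the Remark after the theorem exploits.

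Your version must supply that localization lemma. Your flat-base-change argument for weak proregularity, together with the primewise nonvanishing condition, does this correctly. Your version also uses finite dimension more heavily, as a well-founded induction. In return it is a transparent induction that isolates exactly where CCM enters, and the excision triangle replaces the paper's spectral sequence with supports at the closed point.

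One small correction to Step~2. The sequence $H^{i-1}(X\setminus Z,\mathcal{O})\to H^i_Z(X,\mathcal{O})\to H^i(X,\mathcal{O})$, combined only with Scheiderer's bound for $\dim(X\setminus Z)\le d$, gives vanishing just for $i>d+1$. You should add the following:
\begin{itemize}
\item $X=\spec A_Q$ is local and its closed point lies in $Z$ because $Q\supseteq(\underline{a})$.
\item Hence the quasi-compact open $X\setminus Z$, which is spectral, has Krull dimension at most $d-1$.
\item $H^i(X,\mathcal{O})=0$ for $i>0$ because $X$ is affine.
\end{itemize}
Together these give $H^i_Z(X,\mathcal{O})=0$ for $i>d$, hence $r\le\dim A_Q$. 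Alternatively, cite Hamilton--Marley's Proposition~3.6, as the paper does.
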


The key point of the proof is to choose a minimal point in the support of a nonzero lower \v{C}ech cohomology
module and use a Grothendieck spectral sequence together with the Hamilton--Marley height inequality.

We next apply this comparison theorem to big Cohen--Macaulay algebras. For a Noetherian ring $A$, an
$A$-module $M$ is called \emph{locally balanced big Cohen--Macaulay} if $M_P$ is balanced big Cohen--Macaulay
over $A_P$ for every $P\in\spec A$.

\begin{thm}[Theorem B]\label{thm:intro-transfer}
  Let $A$ be a Noetherian ring, and let $R$ be an integral $A$-algebra that is locally balanced big
  Cohen--Macaulay over $A$. Then $R$ is a locally finite-dimensional CCM ring and hence is locally
  HMCM\@.
\end{thm}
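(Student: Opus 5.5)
The plan is to prove that $R$ is CCM and then apply Theorem~A. Fix $Q\in\spec R$ and let $P=Q\cap A$. We will first check local finite-dimensionality. Since $A\to R$ is integral, incomparability and going up show that chains of primes in $R_Q$ contract to chains in $A_P$ with strict inclusions preserved, so $\dim R_Q\le\dim A_P<\infty$. We will also want equality $\dim R_Q=\dim A_P$, or at least enough to get vanishing for $i<\dim R_Q$; vanishing below $\dim A_P$ would suffice for that.

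To obtain $\dim R_Q=\dim A_P$, we use that $R_P$ is balanced big Cohen--Macaulay over $A_P$. Every system of parameters of $A_P$ is then $R_P$-regular, and in particular $PR_P\neq R_P$. It follows that the minimal primes of $R$ lie over minimal primes of $A$. If some $x\in A_P$ that is not in any minimal prime of $A_P$ were nilpotent modulo the primes under a minimal prime of $R_P$, we could extend it to a system of parameters, and it would then be a zero divisor on $R_P$, a contradiction. Combined with going down for regular-sequence behaviour, this should give: every minimal prime $\mathfrak q$ of $R_Q$ contracts to a prime $\mathfrak p$ with $\dim A_P/\mathfrak p=\dim A_P$. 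This step can be fragile. An alternative is to prove vanishing of $H^i_{QR_Q}$ for $i<\dim A_P$, which dominates the requirement since $\dim R_Q\le \dim A_P$, and so avoid needing equality.

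For the main step, set $d=\dim A_P$ and let $\underline{x}=x_1,\dots,x_d$ be a system of parameters of $A_P$. Because $R_Q$ is integral over $A_P$ localized, $\sqrt{\underline{x}R_Q}=QR_Q$: the radical of $PR_Q$ is $QR_Q$, since every prime of $R_Q$ containing $P$ lies over $P$ and by incomparability equals $QR_Q$. Hence $\spec R_Q\setminus\{QR_Q\}=\bigcup_j D(x_j)$ is quasi-compact, and sheaf cohomology with support is computed by the \v{C}ech complex:
\[
H^i_{QR_Q}(\spec R_Q,\mathcal O)\cong \check H^i(\underline{x};R_Q).
\]
Moreover $\check H^i(\underline{x};R_Q)=\check H^i(\underline{x};R_P)_Q$, and $\check H^i(\underline{x};R_P)$ is $H^i_{PA_P}(R_P)$ computed over the Noetherian ring $A_P$. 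Balanced big Cohen--Macaulayness makes $\underline{x}$ an $R_P$-regular sequence with $R_P/\underline{x}R_P\neq0$, so the Koszul/\v{C}ech argument gives $\check H^i(\underline{x};R_P)=0$ for $i<d$. Localizing at $Q$ then yields $H^i_{QR_Q}=0$ for $i<d$, and in particular for $i<\dim R_Q$.

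Thus $R$ is CCM, and Theorem~A shows that $R$ is locally HMCM. The main obstacle is the identification of sheaf cohomology with support with \v{C}ech cohomology in a non-Noetherian setting. This works because the punctured spectrum is covered by finitely many affine basic opens $D(x_j)$, and quasi-coherent cohomology on affines vanishes. The argument also needs the radical computation $\sqrt{PR_Q}=QR_Q$, which comes from integrality. The dimension bookkeeping then reduces to the inequality $\dim R_Q\le\dim A_P$.
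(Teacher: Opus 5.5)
Your argument is correct and is essentially the paper's proof. You bound $\dim R_Q\le\dim A_P$ by incomparability, show $V(\underline{x}R_Q)=\{QR_Q\}$ for a system of parameters $\underline{x}$ of $A_P$, identify $H^i_{QR_Q}(\spec R_Q,\mathcal O)$ with $\check H^i(\underline{x};R_Q)$, and kill it for $i<\dim A_P$ using regularity of $\underline{x}$. The one difference is that you localize the vanishing of \v{C}ech cohomology from $R_P$ to $R_Q$, while the paper localizes the regular sequence first; this is immaterial. Your attempted proof of $\dim R_Q=\dim A_P$ is unnecessary and you rightly set it aside, since the paper also uses only the inequality.
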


Combining Theorems A and B, we obtain
\[
  \text{locally balanced big Cohen--Macaulayness over }A
  ~\Longrightarrow~
  \mathrm{CCM}
  ~\Longrightarrow~
  \text{locally HMCM}.
\]
The first implication is Theorem B, and the second is Theorem A.

Applying this to absolute integral closures gives the following.

\begin{cor}[Theorem C]\label{cor:intro-absolute}
  Let $A$ be an excellent Noetherian domain, let $p$ be a prime, and let $n\geq1$. Assume that
  $A/pA\neq0$. Then $A^+/p^nA^+$ is CCM and locally HMCM\@. In particular, if
  $\operatorname{char}A=p>0$, then $A^+$ is CCM and locally HMCM\@.
\end{cor}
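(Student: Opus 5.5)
The plan is to deduce Theorem C from Theorem B by exhibiting a Noetherian base over which $A^+/p^nA^+$ is an integral, locally balanced big Cohen--Macaulay algebra. Put $B=A/p^nA$, which is Noetherian and nonzero because $A/pA\neq0$. The map $B\to A^+/p^nA^+$ is integral, since $A^+$ is integral over $A$. So it remains to check that $A^+/p^nA^+$ is locally balanced big Cohen--Macaulay over $B$. Theorem B then gives that $A^+/p^nA^+$ is a locally finite-dimensional CCM ring, and Theorem A (already folded into Theorem B) gives that it is locally HMCM.

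For the balanced big Cohen--Macaulay property, fix a prime $\mathfrak q$ of $B$, i.e.\ a prime $Q\supseteq p^nA$ of $A$, so $p\in Q$. I will use the theorem of Bhatt (excellent case, for instance as in Bhatt's $p$-adic Riemann--Hilbert / $A^+$ is Cohen--Macaulay result, extended to excellent domains via completion) that the $p$-adic completion of $A^+_Q$, or equivalently $A^+_Q/p^n$, behaves as balanced big CM over $A_Q$. Concretely, every system of parameters of $A_Q$ containing $p$, say $p,x_2,\dots,x_d$, is a regular sequence on $A^+_Q$, and $\mathfrak m A^+_Q\neq A^+_Q$. From this, a system of parameters of $B_{\mathfrak q}=A_Q/p^nA_Q$, which has dimension $d-1$, lifts to elements $x_2,\dots,x_d$ such that $p,x_2,\dots,x_d$ is a system of parameters of $A_Q$. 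Since $p^n$ is then a nonzerodivisor on $A^+_Q$ and $p^n,x_2,\dots,x_d$ is also regular (powers of a regular sequence stay regular), $x_2,\dots,x_d$ is regular on $A^+_Q/p^nA^+_Q=(A^+/p^nA^+)_{\mathfrak q}$. Localizing $A^+/p^nA^+$ at $\mathfrak q$ as a $B$-module is the same as localizing $A^+$ at $Q$ and reducing mod $p^n$, so this covers the local condition.

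The main obstacle is invoking the correct form of Bhatt's theorem for a general excellent domain at primes $Q$ containing $p$. Bhatt's result is stated for $p$-adically complete or excellent local rings. I will try the following reductions. First, $A_Q^+=(A^+)_Q$, because $A^+$ localized is the absolute integral closure of $A_Q$. Second, pass to the completion via the standard argument that $\widehat{A_Q}^+$ controls $A_Q^+$ modulo $p^n$; mod-$p^n$ Koszul/\v{C}ech cohomology is insensitive to $p$-adic completion. I also need independence of the choice of system of parameters, which uses the balanced statement rather than just a single regular sequence.

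In characteristic $p$, $p=0$ in $A$, so $A^+/pA^+=A^+$ and $A/pA=A$. The "in particular" clause therefore follows directly from the case $n=1$, or alternatively from Hochster--Huneke applied with $B=A$ and Theorem B.
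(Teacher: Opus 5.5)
Your proposal is correct and follows the same route as the paper. You set $B=A/p^nA$, note that $B\to A^+/p^nA^+$ is integral, get the locally balanced big Cohen--Macaulay property from Bhatt's theorem, and apply Theorem B, with Hochster--Huneke covering characteristic $p$. Two small refinements: the completion reduction is unnecessary, because Bhatt's Theorem~1.1 as cited in the paper already applies to excellent Noetherian domains and directly gives the locally balanced big Cohen--Macaulay property of $A^+/p^nA^+$ over $A/p^nA$. Also, your lifting through systems of parameters $p,x_2,\dots,x_d$ requires $p\neq0$ in $A$, so the characteristic-$p$ case of the main statement (not just the ``in particular'' clause) must indeed be handled by Hochster--Huneke rather than by your $n=1$ argument.
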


In positive characteristic, the HMCM property of $A^+$ was proved by Hamilton--Marley
\cite{Hamilton-Marley2007}*{Theorem~4.11}. Bhatt \cite{Bhatt2021}*{Theorem~1.1} proved that $A^+/p^nA^+$ is
Cohen--Macaulay over $A/p^nA$. Applying Theorem B to these big Cohen--Macaulay algebras shows that
$A^+/p^nA^+$ is CCM and locally HMCM. Since HMCM is not preserved by localization in general \cite{Ando2026},
even in positive characteristic local HMCM does not follow from the HMCM property alone.

Nilpotence of the Frobenius action on local cohomology has been studied in connection with $F$-singularities.
Srinivas--Takagi studied $F$-nilpotent singularities and their relation to the Hodge filtration in
characteristic zero \cite{Srinivas-Takagi2017}. Polstra--Quy introduced relative Frobenius actions and
characterized $F$-nilpotence, as well as nilpotence of Frobenius on lower local cohomology, in terms of
Frobenius closure \cite{Polstra-Quy2019}. Quy further proved that under the latter condition the Frobenius
test exponents of parameter ideals are uniformly bounded \cite{Quy2019}. Here we focus on the latter
condition, namely weak $F$-nilpotence.

For a ring $A$ of characteristic $p>0$, write
\[
  A_{\mathrm{perf}}:=\varinjlim\bigl(A\xrightarrow{F}A\xrightarrow{F}\cdots\bigr)
\]
for its perfection. Using the local characterization of Ma--Polstra together with the local-cohomology
computation in Section~\ref{sec:perfect-closure}, we obtain the following characterization.

\begin{thm}[Theorem D]\label{thm:intro-perfect}
  Let $A$ be a Noetherian ring of characteristic $p>0$. The following are equivalent.
  \begin{enumerate}
    \item $A$ is locally weakly $F$-nilpotent.
    \item $A_{\mathrm{perf}}$ is a locally balanced big Cohen--Macaulay $A$-algebra.
    \item $A_{\mathrm{perf}}$ is CCM\@.
  \end{enumerate}
  If these conditions hold, then $A_{\mathrm{perf}}$ is locally HMCM\@.
\end{thm}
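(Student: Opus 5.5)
The plan is to prove (1)$\Leftrightarrow$(2) locally via Ma--Polstra, (2)$\Rightarrow$(3) via Theorem~B, and (3)$\Rightarrow$(2) via a local-cohomology computation. The final assertion then follows from (3) and Theorem~A.

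\textbf{Reductions.} The Frobenius $F$ is a universal homeomorphism, so $A\to A_{\mathrm{perf}}$ is integral and induces a homeomorphism on spectra. Each prime $P$ of $A$ has a unique prime $P_{\mathrm{perf}}$ above it, and
\[
(A_{\mathrm{perf}})_{P_{\mathrm{perf}}}=(A_P)_{\mathrm{perf}}.
\]
Hence every condition in the statement is local on $\spec A$. We may therefore assume $(A,\ideal m)$ is local of dimension $d$ and compare the conditions at $\ideal m$, applying this at every prime.

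\textbf{Local cohomology and weak $F$-nilpotence.} Local cohomology commutes with filtered colimits, so
\[
H^i_{\ideal m}(A_{\mathrm{perf}})=\ilim\bigl(H^i_{\ideal m}(A)\xrightarrow{F}H^i_{\ideal m}(A)\xrightarrow{F}\cdots\bigr).
\]
Each $H^i_{\ideal m}(A)$ for $i<d$ is Artinian. In the Frobenius-nilpotent direction, by the Hartshorne--Speiser--Lyubeznik theorem the nilpotent part of $F$ on it is killed by a uniform power. So this colimit vanishes exactly when $F$ acts nilpotently on $H^i_{\ideal m}(A)$, i.e.\ $H^i_{\ideal m}(A)=0^F_{H^i}$. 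For $i<d$ this is precisely weak $F$-nilpotence at $\ideal m$.

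I would then cite the Ma--Polstra characterization: a local ring $A$ is weakly $F$-nilpotent iff $A_{\mathrm{perf}}$ is balanced big CM over $A$. This proves (1)$\Leftrightarrow$(2) locally. Alternatively, one can argue directly. Since $A_{\mathrm{perf}}$ is integral over $A$ with the same radical of $\ideal m A_{\mathrm{perf}}$, vanishing of $H^i_{\ideal m}(A_{\mathrm{perf}})$ for $i<d$ is the Čech criterion for every s.o.p.\ to be regular, provided $\ideal m A_{\mathrm{perf}}\neq A_{\mathrm{perf}}$, which is automatic.

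\textbf{(2)$\Rightarrow$(3).} This is Theorem~B, since $A_{\mathrm{perf}}$ is integral over $A$.

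\textbf{(3)$\Rightarrow$(2).} Let $Q=P_{\mathrm{perf}}$. The ring $(A_{\mathrm{perf}})_Q$ has dimension $\dim A_P$ because the spectra are homeomorphic. The sheaf cohomology $H^i_{Q}(\spec (A_{\mathrm{perf}})_Q,\mathcal O)$ must be compared with $H^i_{PA_P}((A_P)_{\mathrm{perf}})$. This is the main obstacle: for non-Noetherian rings the sheaf-theoretic and Čech/module-theoretic versions can differ. I would handle it by writing $\spec(A_P)_{\mathrm{perf}}=\plim \spec A_P$ along Frobenius, with affine transition maps that are homeomorphisms. Sheaf cohomology with support on a quasi-compact open complement commutes with such limits of quasi-compact quasi-separated schemes, giving
\[
H^i_{Q}(\spec(A_{\mathrm{perf}})_Q,\mathcal O)=\ilim H^i_{PA_P}(A_P)=H^i_{PA_P}\bigl((A_P)_{\mathrm{perf}}\bigr).
\]
With this identification, CCM at $Q$ is exactly the vanishing in the previous paragraph, so (3) gives (2) (equivalently (1)) at $P$. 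Finally, (3) together with Theorem~A gives that $A_{\mathrm{perf}}$ is locally HMCM, which is the last assertion.
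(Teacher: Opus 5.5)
Your proposal is correct and follows the paper's proof. It uses Ma--Polstra at each localization for (1)$\Leftrightarrow$(2), Theorem~B for (2)$\Rightarrow$(3), the identification of $H^i_Q(\spec(A_{\mathrm{perf}})_Q,\mathcal O)$ with $\varinjlim_e$ of $H^i_{PA_P}(A_P)$ along $F$ to get back to (1), and Theorem~A at the end. The only variation is how you obtain that identification: you use the commutation of qcqs cohomology with limits on the quasi-compact punctured spectrum, whereas the paper uses Proposition~\ref{prop:sheaf-cech} with a system of parameters of $A_P$ (whose radical in the perfection is the maximal ideal) together with exactness of filtered colimits. Both work.

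Two asides should be dropped:
\begin{itemize}
\item Hartshorne--Speiser--Lyubeznik is not needed. The colimit vanishes iff every element is killed by some $F^e$, which is literally $H^i_{\ideal{m}}(A)=0^F_{H^i_{\ideal{m}}(A)}$.
\item Your ``direct'' alternative to Ma--Polstra is false as stated. Vanishing of $H^i_{\ideal{m}}(R)$ for $i<d$ at the closed point does not make an integral algebra $R$ with $\ideal{m}R\neq R$ balanced big Cohen--Macaulay. For example, take $A=k[[x,y]]$ and the trivial extension $R=A\oplus(A/xA)_y$; there the parameter $x$ is a zero-divisor.
\end{itemize}
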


For local rings, the equivalence \textup{(1)}$\Longleftrightarrow$\textup{(2)} follows from Ma--Polstra
\cite{Ma-Polstra2025}*{Proposition~12.22}. The implication \textup{(2)}$\Longrightarrow$\textup{(3)} follows
from Theorem B, while the converse implication is proved in Section~\ref{sec:perfect-closure}. The final
assertion follows from Theorem A. For a related computation of local cohomology of the perfection and the
Frobenius action under additional hypotheses, see also Cass--Louren\c{c}o
\cite{Cass-Lourenco2025}*{Lemma~2.10}.

The paper is organized as follows. Section~\ref{sec:preliminaries} recalls cohomology with support, HMCM, and
CCM\@. Section~\ref{sec:comparison} proves Theorem A. Section~\ref{sec:integral-applications} proves Theorems
B and C, and Section~\ref{sec:perfect-closure} proves Theorem D.

\section{Two notions of Cohen--Macaulayness for non-Noetherian rings}\label{sec:preliminaries}
\subsection{Hamilton--Marley Cohen--Macaulayness}

Let $A$ be a ring and $M$ an $A$-module. For an ideal $I\subseteq A$, let $\Gamma_I(-)\coloneq \ilim_n
  \hom_A(A/I^n,-)$ be the usual left exact functor, and write $H^i_I(M)$ for its right derived functors, the
local cohomology modules of $M$ with support in $I$. For a finite sequence $\underline{a}=a_1,\dots,a_r$, we
also consider the \v{C}ech cohomology $\check{H}^i(\underline{a},M)$. A finite sequence $\underline{a}$ is
\emph{weakly proregular} if and only if local cohomology and \v{C}ech cohomology agree functorially for every
$A$-module $M$ \cite{Schenzel2003}*{Theorem~3.2}. Over a Noetherian ring, every finite sequence is weakly
proregular.

\begin{defi}[Hamilton--Marley]\label{def:HMCM}
  A finite sequence $\underline{a}=a_1,\dots,a_r$ in a ring $A$ is called a
  \emph{parameter sequence} if the following conditions hold:
  \begin{enumerate}
    \item $\underline{a}$ is weakly proregular;
    \item $\underline{a}A\neq A$;
    \item for every $P\supseteq\underline{a}A$, $\check{H}^r(\underline{a},A)_P\neq0$.
  \end{enumerate}
  If every initial subsequence of $\underline{a}$ is a parameter sequence, then
  $\underline{a}$ is called a \emph{strong parameter sequence}.

  The ring $A$ is called \emph{HMCM} if every strong parameter sequence is an $A$-regular sequence. We say that
  $A$ is \emph{locally HMCM} if $A_P$ is HMCM for every $P\in\spec A$.
\end{defi}

By \cite{Hamilton-Marley2007}*{Proposition~4.2}, a ring $A$ is HMCM if and only if for every strong parameter
sequence $\underline{a}$ of length $r$, $\check{H}^i(\underline{a},A)=0 ~ (i<r)$. Moreover, every coherent
regular ring is locally HMCM \cite{Hamilton-Marley2007}*{Theorem~4.8}.

\subsection{Cohomological Cohen--Macaulayness in the sense of Bhatt}

We recall sheaf-theoretic local cohomology. Let $X$ be a scheme, let $Z\subseteq X$ be closed, and let
$\mathcal F$ be an $\mathcal O_X$-module. We write $\RGamma_Z(X,\mathcal F)$ for the right derived functor of
sections with support in $Z$, and $H_Z^i(X,\mathcal F)$ for its cohomology. In general, this need not agree
with either module-theoretic local cohomology or \v{C}ech cohomology.

\begin{prop}\label{prop:sheaf-cech}
  Let $A$ be a ring, let $M$ be an $A$-module, let
  $\underline{a}=a_1,\dots,a_r$, and set $I=(a_1,\dots,a_r)$. Then, for every $i\geq0$, there is a
  natural isomorphism
  \[
    H^i_{V(I)}(\spec A,\widetilde M)\cong\check{H}^i(\underline{a},M).
  \]
\end{prop}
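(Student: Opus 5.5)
The plan is to compare both sides through the open complement $U=\spec A\setminus V(I)=\bigcup_{j=1}^r D(a_j)$, which is covered by the affine opens $D(a_j)$. Write $X=\spec A$ and $\mathcal F=\widetilde M$.

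First, use the standard exact triangle for cohomology with support,
\[
  \RGamma_{V(I)}(X,\mathcal F)\to \RGamma(X,\mathcal F)\to \RGamma(U,\mathcal F|_U)\xrightarrow{+1}.
\]
This comes from applying $\RGamma(X,-)$ to an injective resolution of $\mathcal F$ by $\mathcal O_X$-modules. Restrictions of injective $\mathcal O_X$-modules to opens are injective (flasque suffices), so the sequence $0\to\Gamma_Z\to\Gamma(X,-)\to\Gamma(U,-)\to0$ is exact on them. Since $X$ is affine and $\mathcal F$ is quasi-coherent, $\RGamma(X,\mathcal F)\simeq M$ in degree $0$.

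Second, compute $\RGamma(U,\mathcal F|_U)$ by the \v{C}ech complex of the cover $\{D(a_j)\}$. Every finite intersection $D(a_{j_1}\cdots a_{j_k})$ is affine, and quasi-coherent sheaves have no higher cohomology on affine schemes (no Noetherian hypothesis is needed; this is Serre vanishing for arbitrary affine schemes). Hence Leray's theorem gives $\RGamma(U,\mathcal F)\simeq \check{C}^{\bullet}_{\ge1}$, the alternating complex $\prod_j M_{a_j}\to\prod_{j<k}M_{a_ja_k}\to\cdots$, placed with the single localizations in degree $0$. The full \v{C}ech complex $\check C^\bullet(\underline a,M)$ is then exactly the cone, shifted by one, of the augmentation $M\to\check{C}^{\bullet}_{\ge1}$.

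Third, identify the map $\RGamma(X,\mathcal F)\to\RGamma(U,\mathcal F)$ with the natural map $M\to\check C^\bullet_{\ge1}$. This forces $\RGamma_{V(I)}(X,\mathcal F)\simeq\check C^\bullet(\underline a,M)$, and taking $H^i$ gives the claimed isomorphism. Naturality in $M$ follows because every step is functorial.

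I expect the third step to be the main obstacle. Identifying the map requires comparing an injective-resolution model with the \v{C}ech model in a way that also fixes the fiber, not merely the cohomology, so the argument needs a chain-level comparison. I would first handle this by choosing an injective resolution $\mathcal F\to\mathcal I^\bullet$ and forming the double complex of \v{C}ech complexes $\check C^\bullet(\{D(a_j)\}\cup\{X\},\mathcal I^\bullet)$, that is, the augmented \v{C}ech complex. Its rows compute the fiber $\Gamma_{V(I)}(\mathcal I^\bullet)$, because the augmented \v{C}ech complex of a flasque sheaf on the cover of $U$ is a resolution. Its columns compute $\check C^\bullet(\underline a,M)$ by affine acyclicity. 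Comparing the two spectral sequences, both of which degenerate, then gives a canonical quasi-isomorphism and avoids any ambiguity about the cone.
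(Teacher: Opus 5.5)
Your argument is correct, but it takes a different route from the paper. The paper gives no argument of its own and simply cites Grothendieck's local cohomology notes (Theorem~2.3 and Proposition~C in its proof). You prove the statement directly, using the double complex $K^{p,q}=\prod_{|J|=p}\Gamma(D(a_J),\mathcal I^q)$ with $0\le p\le r$ and $D(a_\emptyset)=X$.

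\begin{itemize}
\item Each row has cohomology $\Gamma_{V(I)}(X,\mathcal I^q)$ concentrated in degree $0$. This uses \v{C}ech-acyclicity of the flasque sheaf $\mathcal I^q|_U$ together with surjectivity of $\Gamma(X,\mathcal I^q)\to\Gamma(U,\mathcal I^q)$.
\item Each column has cohomology $\check C^p(\underline a,M)$ concentrated in degree $0$. This uses the vanishing of higher quasi-coherent cohomology on the affines $D(a_J)$.
\end{itemize}

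This complex is what your notation $\check C^\bullet(\{D(a_j)\}\cup\{X\},\mathcal I^\bullet)$ should mean. Read literally, that notation is the \v{C}ech complex of a cover of $X$ having $X$ as a member, which computes $\RGamma(X,-)$ rather than the fiber.

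What your route buys over the citation is a canonical chain-level quasi-isomorphism $\RGamma_{V(I)}(X,\widetilde M)\simeq\check C^\bullet(\underline a,M)$, natural in $M$. It also records explicitly that only these two vanishing facts are used, with no Noetherian or weak proregularity hypothesis. That is exactly the generality the paper needs for rings such as $A^+$ and $A_{\mathrm{perf}}$.

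Two simplifications are available. First, once you have the double complex, your first two steps are unnecessary. Second, for the statement as posed, which concerns only cohomology, the obstacle you anticipate in the third step does not arise.

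\begin{itemize}
\item Since $\RGamma(X,\widetilde M)=M$ is concentrated in degree $0$, the long exact sequence of your triangle gives $H^0_{V(I)}=\ker(M\to\Gamma(U,\widetilde M))$ and $H^1_{V(I)}=\operatorname{coker}(M\to\Gamma(U,\widetilde M))$.
\item For $i\ge2$ it gives $H^i_{V(I)}\cong H^{i-1}(U,\widetilde M)$.
\item The complex $\check C^\bullet(\underline a,M)$ has the same description. The kernel of $\prod_jM_{a_j}\to\prod_{j<k}M_{a_ja_k}$ is $\Gamma(U,\widetilde M)$ by the sheaf axiom, and the augmentation $M\to\prod_jM_{a_j}$ is restriction.
\item In degrees $i\ge2$, your Leray identification matches the two sides.
\end{itemize}

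So your first two steps already finish the proof, naturally in $M$. No chain-level comparison of cones is needed.
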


\begin{proof}
  This follows from \cite{Grothendieck1967}*{Theorem~2.3 and Proposition~C in its proof}.
\end{proof}

Consequently, if $I$ is generated by a weakly proregular finite sequence $\underline{a}$, then
$H^i_{V(I)}(\spec A,\widetilde M)\cong H^i_I(M)$.

\begin{defi}\label{def:bhatt-ccm}
  A ring $A$ is called \emph{locally finite-dimensional} if $\dim A_P<\infty$ for every
  $P\in\spec A$. Such a ring $A$ is called \emph{cohomologically Cohen--Macaulay in the sense of
    Bhatt} if, for every $P\in\spec A$,
  \[
    \mathbf{R}\Gamma_{PA_P}(\spec A_P,\mathcal{O})\in D^{\geq\dim A_P},
  \]
  equivalently,
  \[
    H^i_{PA_P}(\spec A_P,\mathcal{O})=0 \qquad \text{for } i<\dim A_P.
  \]
  We abbreviate this condition by saying that $A$ is \emph{CCM}.
\end{defi}

This condition is inspired by the notion of a cohomologically Cohen--Macaulay complex in
\cite{Bhatt2021}*{Definition~2.1}, specialized to $X=\spec A$ and the structure sheaf $\mathcal O_X$. Note
also that every Noetherian ring is locally finite-dimensional by Krull's height theorem.

The following is immediate from the definition.

\begin{prop}\label{prop:ccm-local}
  The CCM property is preserved under localization. In particular, $A$ is CCM if and only if
  $A_{\ideal{m}}$ is CCM for every maximal ideal $\ideal{m}$ of $A$.
\end{prop}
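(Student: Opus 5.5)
The plan is to reduce everything to the identification of localizations of local rings of $A$. Let $S\subseteq A$ be a multiplicative set and $Q\in\spec S^{-1}A$. Write $Q=PS^{-1}A$ with $P\in\spec A$ and $P\cap S=\emptyset$.

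First we note that the canonical map $A_P\to(S^{-1}A)_Q$ is an isomorphism. Indeed, every element of $S$ lies outside $P$ and is therefore already a unit in $A_P$. Under this isomorphism the maximal ideal $QS^{-1}A_Q$ corresponds to $PA_P$. Consequently $\spec (S^{-1}A)_Q\cong\spec A_P$ as schemes, and the closed point corresponds to the closed point.

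We then transport the two pieces of data in Definition~\ref{def:bhatt-ccm} along this isomorphism:
\[
  \dim (S^{-1}A)_Q=\dim A_P<\infty,\qquad
  \mathbf{R}\Gamma_{Q(S^{-1}A)_Q}\bigl(\spec (S^{-1}A)_Q,\mathcal O\bigr)\cong\mathbf{R}\Gamma_{PA_P}(\spec A_P,\mathcal O).
\]
Sheaf cohomology with support is functorial under isomorphisms of schemes that match the closed subsets, so the second identification is immediate. Hence $S^{-1}A$ is locally finite-dimensional, and the vanishing $H^i=0$ for $i<\dim$ at $Q$ is exactly the vanishing at $P$. This shows that $S^{-1}A$ is CCM whenever $A$ is.

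For the ``in particular'', one direction is the special case $S=A\setminus\ideal m$. For the converse, take any $P\in\spec A$ and choose a maximal ideal $\ideal m\supseteq P$. By the first step, $A_P\cong(A_{\ideal m})_{PA_{\ideal m}}$. So the condition at $P$ for $A$ coincides with the condition at $PA_{\ideal m}$ for $A_{\ideal m}$, and that condition holds because $A_{\ideal m}$ is CCM. Local finite-dimensionality of $A$ transfers in the same way.

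There is no real obstacle here. The only point needing care is that the definition is phrased entirely in terms of the local rings $A_P$ with their closed points, and never refers to $A$ globally, so everything follows from the isomorphism of local rings.
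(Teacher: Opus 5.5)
Your proposal is correct and takes the same approach as the paper, which simply states that the result is immediate from the definition. Your identification $(S^{-1}A)_Q\cong A_P$, matching closed points, dimensions and cohomology with support, is exactly that unwritten argument, and you also transfer local finite-dimensionality in both directions.
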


We also record compatibility with the classical Noetherian notion.

\begin{prop}\label{prop:noetherian-compatibility}
  A Noetherian ring $A$ is CCM if and only if it is Cohen--Macaulay in the usual sense.
\end{prop}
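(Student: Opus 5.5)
The plan is to reduce to the classical characterization of Cohen--Macaulay local rings by the vanishing of lower local cohomology at the maximal ideal. Both conditions are local. The CCM condition is local by Definition~\ref{def:bhatt-ccm} and Proposition~\ref{prop:ccm-local}. A Noetherian ring is Cohen--Macaulay in the usual sense if and only if $A_P$ is Cohen--Macaulay for every $P\in\spec A$; it even suffices to check maximal ideals, since localizations of Cohen--Macaulay local rings are Cohen--Macaulay. So it suffices to show, for every $P\in\spec A$, that
\[
  H^i_{PA_P}(\spec A_P,\mathcal O)=0 \quad (i<\dim A_P)
\]
holds if and only if $A_P$ is Cohen--Macaulay.

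Fix $P$ and write $(B,\ideal m)=(A_P,PA_P)$, a Noetherian local ring of dimension $d<\infty$. Choose generators $\underline{a}=a_1,\dots,a_s$ of $\ideal m$. Proposition~\ref{prop:sheaf-cech} identifies $H^i_{V(\ideal m)}(\spec B,\mathcal O)$ with $\check H^i(\underline a,B)$, and $V(\ideal m)$ is the closed point, matching the support in the definition. Since $B$ is Noetherian, every finite sequence is weakly proregular. The remark after Proposition~\ref{prop:sheaf-cech} then gives
\[
  H^i_{\ideal m}(\spec B,\mathcal O)\cong H^i_{\ideal m}(B),
\]
the usual module-theoretic local cohomology.

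Next I invoke Grothendieck's theorem for a finitely generated module over a Noetherian local ring: $\mdepth B=\min\{i: H^i_{\ideal m}(B)\neq0\}$ and $H^d_{\ideal m}(B)\neq0$. Hence $H^i_{\ideal m}(B)=0$ for all $i<d$ if and only if $\mdepth B\geq d=\dim B$, which is exactly the Cohen--Macaulay condition. Combined with the identification above, this gives the equivalence at each $P$, and hence the proposition.

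There is no real obstacle. The only point needing care is that the sheaf cohomology with support in the closed point agrees with the derived-functor local cohomology of $B$. That follows from Proposition~\ref{prop:sheaf-cech} together with weak proregularity in the Noetherian case, and the locality reduction is standard.
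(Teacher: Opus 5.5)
Your proposal is correct and follows essentially the same route as the paper. Both proofs identify the sheaf-theoretic cohomology with support at the closed point with ordinary local cohomology in the Noetherian case, and then apply Grothendieck's characterization $\mdepth A_P=\min\{i\mid H^i_{PA_P}(A_P)\neq0\}$ at each prime. You additionally spell out the identification via Proposition~\ref{prop:sheaf-cech} and weak proregularity, which the paper states in one line.
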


\begin{proof}
  For a Noetherian ring, sheaf-theoretic local cohomology agrees with the usual local cohomology.
  Moreover, by the local cohomology characterization of depth
  \cite{Bruns-Herzog1997}*{Theorem~3.5.7},
  \[
    \mdepth A_P=\inf\{i\mid H^i_{PA_P}(A_P)\neq0\}.
  \]
  Thus the vanishing of local cohomology for $i<\dim A_P$ is equivalent to $\mdepth A_P=\dim A_P$.
\end{proof}

\section{Comparison between CCM and locally HMCM}\label{sec:comparison}

\begin{thm}\label{thm:ccm-to-locally-hmcm}
  Let $A$ be a locally finite-dimensional CCM ring. Then $A$ is locally HMCM\@. More precisely, for every $P\in\spec A$ and every parameter sequence $\underline{a}$ of length $r$ in $A_P$, we have $\check{H}^i(\underline{a},A_P)=0$ for every $i<r$.
\end{thm}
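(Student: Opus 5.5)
Fix $P\in\spec A$ and put $B=A_P$, $X=\spec B$. By Proposition~\ref{prop:ccm-local}, $B$ and all its localizations are CCM. Let $\underline{a}=a_1,\dots,a_r$ be a parameter sequence in $B$ and set $I=\underline{a}B$. Every strong parameter sequence is in particular a parameter sequence. So once we show $\check{H}^i(\underline{a},B)=0$ for $i<r$, the criterion \cite{Hamilton-Marley2007}*{Proposition~4.2} gives that $B$ is HMCM, hence $A$ is locally HMCM. The argument for the vanishing is by contradiction.

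Suppose some $\check{H}^i(\underline{a},B)$ with $i<r$ is nonzero. The set $\bigcup_{i<r}\supp\check{H}^i(\underline{a},B)$ is nonempty. I would choose a prime $Q$ minimal in it; to know such a $Q$ exists, I would check that the supports are closed under generization-free descending chains (e.g.\ by Zorn's lemma on primes in a finite union of supports), or simply take $Q$ minimal in the support of one fixed nonzero module and then use the smallest such $i$. Since \v{C}ech cohomology is $I$-torsion, $Q\supseteq I$. \v{C}ech cohomology commutes with localization, so over $C=B_Q$ the modules $N_q:=\check{H}^q(\underline{a},C)$ for $q<r$ are supported only at the closed point $QC$, and not all of them vanish. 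Let $q_0<r$ be the smallest index with $N_{q_0}\neq0$.

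Now compare with cohomology at the closed point. On $Y=\spec C$ we have $\{QC\}\subseteq V(IC)$, so $\RGamma_{QC}=\RGamma_{QC}\circ\RGamma_{V(IC)}$. This gives a Grothendieck spectral sequence
\[
  E_2^{p,q}=H^p_{QC}\bigl(Y,\widetilde{H^q_{V(IC)}(Y,\mathcal O)}\bigr)\Longrightarrow H^{p+q}_{QC}(Y,\mathcal O).
\]
By Proposition~\ref{prop:sheaf-cech}, $H^q_{V(IC)}(Y,\mathcal O)\cong N_q$.

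For $q<r$, the quasi-coherent sheaf $\widetilde{N_q}$ is supported at the closed point. Consider the long exact sequence
\[
  H^p_{QC}\to H^p(Y,-)\to H^p(Y\setminus\{QC\},-).
\]
Affineness kills the middle term for $p>0$, and the restriction to $Y\setminus\{QC\}$ is zero. Hence $E_2^{0,q}=N_q$ and $E_2^{p,q}=0$ for $p>0$. Rows $q<q_0$ vanish, so $E_2^{0,q_0}$ receives no differentials. Its outgoing differentials land in $E_2^{p,q_0-p+1}$ with $p\ge2$, which is zero because $q_0-p+1<q_0$. Therefore
\[
  H^{q_0}_{QC}(Y,\mathcal O)\cong N_{q_0}\neq0.
\]

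Finally I need $q_0<\dim C$. Here I would invoke the Hamilton--Marley height inequality: every prime containing a parameter sequence of length $r$ has height at least $r$. This gives $\dim C=\operatorname{ht}Q\ge r>q_0$, contradicting the CCM property of $C$.

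I expect the main obstacle to be this height inequality. One must verify it applies to $\underline{a}$ viewed in $C$, since parameter sequences localize well because weak proregularity and condition (3) are local. One must also make sure it yields $\operatorname{ht}Q\ge r$ rather than only a statement about minimal primes of $I$. If only the minimal-prime version is available, I would pass to a minimal prime $Q'\subseteq Q$ of $I$ and use $\operatorname{ht}Q\ge\operatorname{ht}Q'$. A secondary delicate point is the vanishing of $H^p_{QC}$ of a sheaf supported at the closed point, which needs the affine-vanishing argument above.
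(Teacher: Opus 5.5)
Your proposal is correct and is essentially the paper's proof. Both arguments pick a prime minimal in the support of a nonzero lower \v{C}ech cohomology module. Both use the Grothendieck spectral sequence $H^p_{QC}(Y,\mathcal H^q_{V(IC)}(\mathcal O_Y))\Rightarrow H^{p+q}_{QC}(Y,\mathcal O_Y)$ with $\mathcal H^q_{V(IC)}(\mathcal O_Y)\cong\widetilde{N_q}$; the paper cites SGA2 for this identification, which you use without justification. Both degenerate it because $\widetilde{N_{q_0}}$ is supported at the closed point of an affine scheme, and then play CCM against the Hamilton--Marley bound $r\le\operatorname{ht}Q$. The only difference in setup is that the paper fixes the smallest global index first and then takes a minimal prime of its support; this works just as well as your union-of-supports choice.

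One step needs a different justification: the existence of the minimal prime. Zorn's lemma does not obviously apply to supports of non-finitely generated modules. Since $\dim A_P<\infty$, however, every chain in $\spec A_P$ is finite, so a minimal element exists trivially. This is exactly where local finite-dimensionality enters.
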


\begin{proof}
  By Proposition~\ref{prop:ccm-local}, every localization of a CCM ring is a finite-dimensional CCM local ring. Thus it suffices to show that every finite-dimensional CCM local ring is HMCM\@.

  Let $\underline{a}=a_1,\dots,a_r$ be a parameter sequence of $A$, and set $i=\min\{j\leq r\mid
    \check{H}^j(\underline{a},A)\neq0\}$. Since $\supp\check{H}^i(\underline{a},A)$ is nonempty and $\dim
    A<\infty$, it has a minimal element $P$. Moreover, $\supp\check{H}^i(\underline{a},A)\subseteq
    V(\underline{a}A)$. Indeed, let $Q\notin V(\underline{a}A)$. Then $a_j\notin Q$ for some $j$. Since
  $\check{H}^i(\underline{a},A)_Q=\check{H}^i(\underline{a},A_Q)$ and $a_j$ is a unit in $A_Q$, we have
  $\check{H}^i(\underline{a},A_Q)=0$; see \cite{StacksProject}*{Tag~0G6J}.

  Set $X=\spec A_P$ and $Z=V(\underline{a}A_P)$. By Proposition~\ref{prop:sheaf-cech}, for every $j$ we have
  $\check{H}^j(\underline{a},A)_P\cong H^j_Z(X,\mathcal{O}_X)$. For the local cohomology sheaf
  $\mathcal{H}^j_Z(\mathcal{O}_X)$, \cite{SGA2}*{EXP. \ajRoman{2}, Cor.~4 and Prop.~5} gives
  \[
    \mathcal{H}^j_Z(\mathcal{O}_X)
    =\widetilde{\check{H}^j(\underline{a},A)_P}.
  \]
  Since $PA_P\in Z$, sections supported at $PA_P$ are already supported in $Z$, so
  \[
    \Gamma_{PA_P}(X,\mathcal{H}^0_Z(-))=\Gamma_{PA_P}(X,-).
  \]
  Hence, by \cite{Grothendieck1957}*{Theorem~2.4.1}, there is a spectral sequence
  \[
    E_2^{p,q}
    =H^p_{PA_P}(X,\mathcal{H}^q_Z(\mathcal{O}_X))
    \Rightarrow H^{p+q}_{PA_P}(X,\mathcal{O}_X).
  \]

  Let $U\coloneq X\setminus\{PA_P\}$. For $q=i$, \cite{Grothendieck1967}*{Cor.~1.9} gives, for
  $N\coloneq\check{H}^i(\underline{a},A)_P\neq0$, the long exact sequence
  \[
    \begin{aligned}
      0 &\rightarrow \Gamma_{PA_P}(X,\widetilde{N})
      \rightarrow \Gamma(X,\widetilde{N})
      \rightarrow \Gamma(U,\widetilde{N}) \\
      &\rightarrow H^1_{PA_P}(X,\widetilde{N})
      \rightarrow H^1(X,\widetilde{N})
      \rightarrow H^1(U,\widetilde{N})
      \rightarrow \cdots.
    \end{aligned}
  \]
  By the minimality of $P$, the sheaf $\widetilde{N}$ is supported only at $\{PA_P\}$. Hence
  $H^p_{PA_P}(X,\widetilde{N})\cong H^p(X,\widetilde{N})$ for every $p$. Since $X=\spec A_P$ is affine and
  $\widetilde{N}$ is quasi-coherent, $H^0_{PA_P}(X,\widetilde{N})=N$ and $H^p_{PA_P}(X,\widetilde{N})=0$ for
  $p>0$. Thus $E_2^{0,i}=N$ and $E_2^{p,i}=0$ for $p>0$. Moreover, $E_2^{p,j}=0$ for $j<i$, and therefore
  \[
    H^i_{PA_P}(X,\mathcal{O}_X)
    =E_{\infty}^{0,i}
    =E_2^{0,i}
    =N\neq0.
  \]
  Since $A$ is CCM, we must have $i\geq\dim A_P$. On the other hand, $\underline{a}A\subseteq P$, so
  \cite{Hamilton-Marley2007}*{Proposition~3.6} gives $i\leq r\leq\dim A_P$. Hence $i=r$. The characterization
  of Hamilton--Marley \cite{Hamilton-Marley2007}*{Proposition~4.2} now shows that $A$ is HMCM\@.
\end{proof}

\begin{rem}
  The locally finite-dimensional hypothesis is used in the proof above only to choose a minimal element in the support of a nonzero \v{C}ech cohomology module. Thus the same argument applies whenever, for every localization and every parameter sequence, the nonzero supports of the lower \v{C}ech cohomology modules have minimal elements.
\end{rem}

We next give examples of locally HMCM rings that are not CCM\@.

\begin{prop}
  Let $V$ be a valuation ring of finite Krull dimension. Then the following are equivalent:
  \begin{enumerate}
    \item $V$ is CCM\@.
    \item $\dim V\leq1$.
  \end{enumerate}
  Moreover, every valuation ring is a coherent regular ring and is locally HMCM\@. In particular, not every coherent regular ring or locally HMCM ring is CCM\@.
\end{prop}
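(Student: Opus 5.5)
We prove (2)$\Rightarrow$(1) by a direct computation, (1)$\Rightarrow$(2) via local cohomology of a valuation ring, and the final assertions from Hamilton--Marley and the implication just shown.

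\emph{(2)$\Rightarrow$(1).} Let $\dim V\le 1$. Every localization $V_P$ is again a valuation ring of dimension at most $1$, so by Proposition~\ref{prop:ccm-local} it suffices to treat a local $V$ with maximal ideal $\ideal m$.
\begin{itemize}
  \item If $\dim V=0$, the condition $i<0$ is vacuous.
  \item If $\dim V=1$, we must show $H^0_{\ideal m}(\spec V,\mathcal O)=0$. This group consists of the sections of $V$ supported at the closed point. Since $V$ is a domain, $\Gamma_{\ideal m}(V)=0$: a nonzero element cannot restrict to zero on the nonempty open generic point.
\end{itemize}

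\emph{(1)$\Rightarrow$(2).} Suppose $\dim V=d\ge2$. Localize at a prime of height exactly $2$; this is allowed by Proposition~\ref{prop:ccm-local} and the fact that $V$ has finitely many primes forming a chain. We may therefore assume $V$ is local of dimension $2$, with primes $0\subsetneq\ideal p\subsetneq\ideal m$. We show $H^1_{\ideal m}(X,\mathcal O)\neq0$, where $X=\spec V$.

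Choose $a\in\ideal m\setminus\ideal p$. Then $\sqrt{aV}=\ideal m$, because the radical ideals of $V$ are exactly its primes and the only prime containing $a$ is $\ideal m$. So $V(a)=\{\ideal m\}$, and Proposition~\ref{prop:sheaf-cech} gives
\[
H^1_{\ideal m}(X,\mathcal O)\cong \check H^1(a,V)=V_a/V.
\]
Now $V_a=V_{\ideal p}$, since inverting $a$ kills exactly the primes containing $a$, leaving the overring $V_{\ideal p}$. The inclusion $V\subsetneq V_{\ideal p}$ is strict because $\ideal p\neq\ideal m$. Hence $H^1_{\ideal m}\neq0$ with $1<2=\dim V$, so $V$ is not CCM.

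The main points to verify are $V_a=V_{\ideal p}$ and $\sqrt{aV}=\ideal m$. Both follow from the fact that the prime ideals of $V$ form a chain and each overring of $V$ is a localization of $V$ at a prime.

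\emph{Final assertions.}
\begin{itemize}
  \item Every valuation ring is coherent: finitely generated ideals are principal, hence free. It is also regular: every finitely generated ideal has finite projective dimension, indeed is free. Therefore \cite{Hamilton-Marley2007}*{Theorem~4.8} shows it is locally HMCM.
  \item Any valuation ring of finite dimension at least $2$ exists; for example, take the lexicographic value group $\mathbb Z^2$. Such a ring is locally HMCM but, by the equivalence just proved, not CCM. This gives the final sentence of the statement.
\end{itemize}
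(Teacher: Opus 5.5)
Your proof is correct and follows the paper's argument. For (2)$\Rightarrow$(1) you use that $V_P$ is a domain; for (1)$\Rightarrow$(2) you exhibit a nonunit $a$ with $V(a)=\{\ideal m\}$ and show $H^1_{\ideal m}(\spec V,\mathcal O)\cong V_a/V\neq0$; and you cite Hamilton--Marley Theorem~4.8 for the final assertions.

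The only difference is in how the key step is justified. You get the existence of $a$ from the finite chain of primes and the isomorphism from Proposition~\ref{prop:sheaf-cech}, whereas the paper cites Datta's Lemma~6.2 and Theorem~6.1. Your reduction to height $2$ and the identification $V_a=V_{\ideal p}$ are harmless but unnecessary, since $1/a\notin V$ already gives $V_a/V\neq0$.
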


\begin{proof}
  Assume that $\dim V\leq1$. For every $P\in\spec V$, we have $\dim V_P\leq1$. If $\dim V_P=0$, the CCM condition is vacuous. If $\dim V_P=1$, then, since $V_P$ is a domain,
  \[
    H^0_{PV_P}(\spec V_P,\mathcal{O})
    =\Gamma_{PV_P}(V_P)
    =0.
  \]
  Hence $V$ is CCM\@.

  Conversely, assume that $\dim V\geq2$. Let $\ideal{m}$ be the maximal ideal of $V$ and set $U=\spec
    V\setminus\{\ideal{m}\}$. Since $V$ has finite dimension, in particular $\#\spec V<\infty$, and hence $U$ is
  quasi-compact. By \cite{Datta2017}*{Lemma~6.2}, there exists $f\in\ideal{m}$ such that $U=D(f)=\spec V_f$.
  Furthermore, \cite{Datta2017}*{Theorem~6.1} gives
  \[
    H^1_{\ideal{m}}(\spec V,\mathcal{O})
    \cong
    \operatorname{coker}(V\rightarrow V_f)
    =V_f/V.
  \]
  Since $f$ is a nonunit, $1/f\notin V$, and hence $V_f/V\neq0$. Since $1<\dim V$, it follows that $V$ is not
  CCM\@.

  Finally, every finitely generated ideal of a valuation ring is principal and hence projective, so $V$ is a
  coherent regular ring. By \cite{Hamilton-Marley2007}*{Theorem~4.8}, $V$ is locally HMCM\@.
\end{proof}

\section{Applications to integral big Cohen--Macaulay algebras}\label{sec:integral-applications}

\begin{defi}
  Let $A$ be a Noetherian ring and $M$ an $A$-module. We say that $M$ is
  \emph{locally balanced big Cohen--Macaulay} if, for every $P\in\spec A$, the localization $M_P$ is a balanced big Cohen--Macaulay $A_P$-module; that is,
  $M_P/P M_P\neq0$, and every system of parameters of $A_P$ is an
  $M_P$-regular sequence.
\end{defi}

If an $A$-algebra $R$ satisfies this condition as an $A$-module, we call $R$ a locally balanced big
Cohen--Macaulay $A$-algebra.

\begin{thm}\label{thm:integral-transfer}
  Let $A$ be a Noetherian ring and let $R$ be a locally balanced big Cohen--Macaulay $A$-algebra that is integral over $A$.
  Then $R$ is a locally finite-dimensional CCM ring, and hence is locally HMCM\@.
\end{thm}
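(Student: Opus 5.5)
Because $R$ is integral over $A$, going up, lying over and incomparability for $A\to R$ tie the prime structure of $R$ to that of the Noetherian ring $A$; the plan is to use this to reduce the CCM condition for $R$ to the big Cohen--Macaulay condition over $A$.

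Fix $\mathfrak Q\in\spec R$ and set $\mathfrak p=\mathfrak Q\cap A$.

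\textbf{Step 1: finite dimension and $\dim R_{\mathfrak Q}\le \dim A_{\mathfrak p}$.} Incomparability says a strict chain of primes in $R_{\mathfrak Q}$ contracts to a strict chain in $A_{\mathfrak p}$. Hence $\dim R_{\mathfrak Q}\le\dim A_{\mathfrak p}=:d<\infty$, and $R$ is locally finite-dimensional.

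\textbf{Step 2: a parameter system cutting out $\mathfrak Q$.} Choose a system of parameters $x_1,\dots,x_d$ of $A_{\mathfrak p}$. The ring $R_{\mathfrak p}$ is integral over $A_{\mathfrak p}$, and every prime of $R_{\mathfrak p}$ containing $\underline{x}$ lies over $\mathfrak pA_{\mathfrak p}$. By incomparability these primes are maximal, so $V(\underline{x}R_{\mathfrak p})$ is the set of maximal ideals of $R_{\mathfrak p}$. Localizing further at $\mathfrak Q$, we get $V(\underline{x}R_{\mathfrak Q})=\{\mathfrak QR_{\mathfrak Q}\}$. Proposition~\ref{prop:sheaf-cech} then gives
\[
H^i_{\mathfrak QR_{\mathfrak Q}}(\spec R_{\mathfrak Q},\mathcal O)\cong\check H^i(\underline{x},R_{\mathfrak Q})\cong\check H^i(\underline{x},R_{\mathfrak p})_{\mathfrak Q}.
\]
By hypothesis $\underline{x}$ is a regular sequence on $R_{\mathfrak p}$, so $\check H^i(\underline{x},R_{\mathfrak p})=0$ for $i<d$. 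Since $\dim R_{\mathfrak Q}\le d$, this gives the required vanishing for all $i<\dim R_{\mathfrak Q}$.

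\textbf{Step 3: the main obstacle.} The remaining point is why a regular sequence of length $d$ kills \v{C}ech cohomology below $d$. I would argue by induction on $d$, using the exact sequence
\[
\cdots\to\check H^{i}(x_1,\dots,x_{d-1},M)\to\check H^{i}(x_1,\dots,x_{d-1},M)_{x_d}\to\check H^{i+1}(\underline{x},M)\to\cdots,
\]
where $\check H^{i}(x_1,\dots,x_{d-1},M)_{x_d}$ is the localization of $\check H^{i}(x_1,\dots,x_{d-1},M)$ at $x_d$. Each $\check H^{i}(x_1,\dots,x_{d-1},M)$ vanishes for $i<d-1$ by induction. For $i=d-1$, this module is $x_d$-torsion-free, which follows from regularity by writing it as $\varinjlim M/(x_1^t,\dots,x_{d-1}^t)M$. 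So the map to its localization at $x_d$ is injective, which gives the vanishing of $\check H^{j}(\underline{x},M)$ for all $j<d$. The only input needed is that powers of a regular sequence are again regular.

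Alternatively, one can avoid the computation: \v{C}ech cohomology of $A_{\mathfrak p}$-modules with respect to $\underline{x}$ equals local cohomology $H^i_{\mathfrak p}(R_{\mathfrak p})$ over the Noetherian ring $A_{\mathfrak p}$, and this vanishes for $i<d$ by the standard depth argument for balanced big Cohen--Macaulay modules. The problematic case is when $\dim R_{\mathfrak Q}<d$. Here the vanishing through degree $d-1$ is more than CCM requires, so it causes no difficulty.

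Finally, $R$ is locally HMCM by Theorem~\ref{thm:ccm-to-locally-hmcm}.
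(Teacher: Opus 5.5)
Your proposal is correct and follows the paper's proof essentially step by step. The shared steps are: incomparability to get $\dim R_{\mathfrak Q}\le\dim A_{\mathfrak p}$; a system of parameters of $A_{\mathfrak p}$ with $V(\underline{x}R_{\mathfrak Q})=\{\mathfrak QR_{\mathfrak Q}\}$; Proposition~\ref{prop:sheaf-cech}; and vanishing of \v{C}ech cohomology below $d$ for a regular sequence. The only differences are cosmetic: you compute $\check H^i(\underline{x},R_{\mathfrak p})$ and then localize at $\mathfrak Q$, rather than first passing regularity to $R_{\mathfrak Q}$, and you prove the \v{C}ech vanishing by an explicit induction where the paper simply cites \v{C}ech cohomology as a direct limit of Koszul cohomology.
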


\begin{proof}
  Let $\mathfrak{p}\in\spec R$ and set $P=\mathfrak{p}\cap A$.
  Since $R$ is a locally balanced big Cohen--Macaulay $A$-algebra and is integral over $A$, $R_P$ is a balanced big Cohen--Macaulay $A_P$-algebra and $A_P\to R_P$ is integral.
  Set $d=\dim A_P$. Since $A\to R$ is integral, we have
  $\dim R_{\mathfrak{p}}\leq d$.
  It therefore suffices to show that $H^i_{\mathfrak{p}R_{\mathfrak{p}}}(\spec R_{\mathfrak{p}},\mathcal{O})=0$ for every $i<d$.

  Let $\underline{a}=a_1,\dots,a_d\in A_P$ be a system of parameters of $A_P$. Since $R_P$ is a balanced big
  Cohen--Macaulay $A_P$-algebra, $\underline{a}$ is an $R_P$-regular sequence. Now
  $R_{\mathfrak{p}}=(R_P)_{\mathfrak{p}R_P}$, and regularity is preserved under localization, so
  $\underline{a}$ is also an $R_{\mathfrak{p}}$-regular sequence.

  We claim that $V(\underline{a}R_{\mathfrak{p}})=\{\mathfrak{p}R_{\mathfrak{p}}\}$. Let
  $\mathfrak{q}R_{\mathfrak{p}}\in V(\underline{a}R_{\mathfrak{p}})$. Its contraction to $R_P$ is
  $\mathfrak{q}R_P=\mathfrak{q}R_{\mathfrak{p}}\cap R_P$. Since
  $\underline{a}R_{\mathfrak{p}}\subset\mathfrak{q}R_{\mathfrak{p}}$, we have $\underline{a}A_P\subset
    \mathfrak{q}R_P\cap A_P\subset PA_P$. Since $\underline{a}$ is a system of parameters of $A_P$,
  $\sqrt{\underline{a}A_P}=PA_P$. Hence $\mathfrak{q}R_P\cap A_P=PA_P$. On the other hand, $\mathfrak{p}R_P\cap
    A_P=PA_P$. Since $A_P\to R_P$ is integral and $\mathfrak{q}R_P\subseteq\mathfrak{p}R_P$, incomparability
  gives $\mathfrak{q}R_P=\mathfrak{p}R_P$. Thus $\mathfrak{q}R_{\mathfrak{p}}=\mathfrak{p}R_{\mathfrak{p}}$,
  proving the claim.

  By Proposition~\ref{prop:sheaf-cech}, $H^i_{\mathfrak{p}R_{\mathfrak{p}}}(\spec
    R_{\mathfrak{p}},\mathcal{O})=\check{H}^i(\underline{a},R_{\mathfrak{p}})$. Since $\underline{a}$ is an
  $R_{\mathfrak{p}}$-regular sequence and \v{C}ech cohomology is a direct limit of Koszul cohomology, we have
  $\check{H}^i(\underline{a},R_{\mathfrak{p}})=0$ for every $i<d$.

  Thus $R$ is CCM\@. By Theorem~\ref{thm:ccm-to-locally-hmcm}, $R$ is locally HMCM\@.
\end{proof}

\begin{cor}\label{cor:absolute-positive-characteristic}
  Let $A$ be an excellent Noetherian domain of characteristic $p>0$.
  Then $A^+$ is CCM and locally HMCM\@.
\end{cor}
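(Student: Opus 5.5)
The plan is to deduce the corollary directly from Theorem~\ref{thm:integral-transfer}, applied to the integral extension $A\to A^+$. Since $A^+$ is integral over $A$ by construction, the only thing to check is that $A^+$ is a locally balanced big Cohen--Macaulay $A$-algebra. Concretely, for every $P\in\spec A$ we need $A^+_P/PA^+_P\neq0$, and we need every system of parameters of $A_P$ to be $A^+_P$-regular.

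To get this, I would first identify $(A^+)_P=A^+\otimes_A A_P$ with $(A_P)^+$. The reason is that $A^+$ is the integral closure of $A$ in an algebraic closure $\overline{K}$ of $K=\operatorname{Frac}A$. Localizing at the multiplicative set $A\setminus P$ gives the integral closure of $A_P$ in $\overline{K}$, and $\overline{K}$ is also an algebraic closure of $\operatorname{Frac}A_P$. Next, $A_P$ is again an excellent local domain of characteristic $p>0$. The Hochster--Huneke theorem \cite{Hochster-Huneke1992} then says that $(A_P)^+$ is a balanced big Cohen--Macaulay $A_P$-algebra, so every system of parameters of $A_P$ is a regular sequence on it.

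For the nondegeneracy condition $A^+_P\neq PA^+_P$, I would use lying over: $A_P\to(A_P)^+$ is an integral injective extension, so some prime of $(A_P)^+$ contracts to $PA_P$, and hence $PA^+_P$ is a proper ideal. With these two facts, Theorem~\ref{thm:integral-transfer} shows that $A^+$ is a locally finite-dimensional CCM ring and is locally HMCM.

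The main obstacle is mild: making sure that the hypotheses of Hochster--Huneke apply to each localization. In other words, one must confirm that excellence passes to $A_P$ (it does, since excellence is preserved under localization) and that the compatibility $(A^+)_P\cong(A_P)^+$ holds as stated. If the cited Hochster--Huneke result is only stated for excellent local domains, this localization step is exactly what is needed, and no further work is required.
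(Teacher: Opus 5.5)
Your proposal is correct and follows the paper's own proof: you check that $A^+$ is locally balanced big Cohen--Macaulay via $(A^+)_P\cong(A_P)^+$, the excellence of $A_P$, and Hochster--Huneke, and then apply Theorem~\ref{thm:integral-transfer} to the integral extension $A\to A^+$. Your lying-over argument for $(A^+)_P\neq P(A^+)_P$ is a harmless extra step that the paper leaves implicit in its citation of Hochster--Huneke.
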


\begin{proof}
  Let $P\in\spec A$. Then $A_P$ is an excellent Noetherian local domain of characteristic $p>0$.
  By Hochster--Huneke \cite{Hochster-Huneke1992}, every system of parameters of $A_P$ is a regular sequence on $(A_P)^+$.
  Since absolute integral closure commutes with localization, $(A^+)_P\cong (A_P)^+$.
  Therefore $A^+$ is a locally balanced big Cohen--Macaulay $A$-algebra.
  Since $A\to A^+$ is integral, Theorem~\ref{thm:integral-transfer} shows that $A^+$ is CCM and locally HMCM\@.
\end{proof}

Applying Theorem~\ref{thm:integral-transfer} to Bhatt's theorem \cite{Bhatt2021} gives the following.

\begin{cor}[absolute integral closure modulo $p^n$]\label{cor:absolute-integral-closure}
  Let $A$ be an excellent Noetherian domain, let $p$ be a prime number, and let $n\geq1$.
  Assume that $A/pA\neq0$.
  Let $A^+$ denote the absolute integral closure of $A$.
  Then $A^+/p^nA^+$ is CCM and locally HMCM\@.
\end{cor}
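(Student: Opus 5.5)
The plan is to apply Theorem~\ref{thm:integral-transfer} with base ring $A/p^nA$ and algebra $R=A^+/p^nA^+$. First, $A/p^nA$ is Noetherian, and $A/p^nA\to A^+/p^nA^+$ is integral because $A\to A^+$ is integral and integrality passes to quotients. It therefore remains to show that $A^+/p^nA^+$ is a locally balanced big Cohen--Macaulay $A/p^nA$-algebra. Concretely, for every prime $\bar P$ of $A/p^nA$, corresponding to a prime $P\supseteq pA$ of $A$, we need:
\begin{enumerate}
  \item $(A^+/p^nA^+)_{\bar P}\neq \bar P(A^+/p^nA^+)_{\bar P}$;
  \item every system of parameters of $(A/p^nA)_{\bar P}=A_P/p^nA_P$ is a regular sequence on $(A^+/p^nA^+)_{\bar P}=(A_P)^+/p^n(A_P)^+$.
\end{enumerate}
The identification in (2) uses that absolute integral closure commutes with localization, as in the proof of Corollary~\ref{cor:absolute-positive-characteristic}.

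Next, localize to the case where $A$ is an excellent local domain with $p$ in the maximal ideal; here $A_P$ is excellent because excellence is preserved by localization. Bhatt's theorem \cite{Bhatt2021}*{Theorem~1.1} says that $A^+/p^nA^+$ is Cohen--Macaulay over $A/p^nA$. Its standard consequence is that if $p,x_2,\dots,x_d$ is a system of parameters of $A$, then this sequence is regular on $A^+$, and hence $x_2,\dots,x_d$ is regular on $A^+/p^nA^+$. Since $p$ is a nonzerodivisor in the domain $A$, we have $\dim A/p^nA=\dim A-1$. Every system of parameters $y_1,\dots,y_{d-1}$ of $A/p^nA$ lifts so that $p,y_1,\dots,y_{d-1}$ is a system of parameters of $A$. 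Thus every system of parameters of $A/p^nA$ is regular on $A^+/p^nA^+$.

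Nontriviality follows because $A\to A^+$ is integral and injective, so lying over gives $\mathfrak m A^+\neq A^+$. This yields both conditions at every $\bar P$, and Theorem~\ref{thm:integral-transfer} then gives that $A^+/p^nA^+$ is CCM and locally HMCM.

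The main obstacle is the second step: extracting from Bhatt's formulation (a statement about $p$-complete or derived local cohomology of $A^+$) the precise statement that every system of parameters of the quotient is regular, for every local ring of $A$ and for every $n$, not just $n=1$. If Bhatt's theorem is stated only for $A^+/pA^+$, or only for parameter sequences starting with $p$, I would deduce the $p^n$ case by induction on $n$. The tool is the short exact sequences
\[
0\to A^+/p^{n-1}A^+\xrightarrow{\,p\,}A^+/p^nA^+\to A^+/pA^+\to0,
\]
which are exact since $A^+$ is $p$-torsion free. Koszul homology vanishing then propagates across the sequence. I would also reduce an arbitrary system of parameters of $A/p^nA$ to one of $A/pA$ by using that $p$ is nilpotent in $A/p^nA$, so the two rings have the same spectrum and the same systems of parameters.
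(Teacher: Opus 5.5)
\textbf{Same strategy as the paper, but a case is missing.} Your overall strategy matches the paper's. You apply Theorem~\ref{thm:integral-transfer} to the integral map $A/p^nA\to A^+/p^nA^+$, with the locally balanced big Cohen--Macaulay hypothesis coming from \cite{Bhatt2021}*{Theorem~1.1}; the paper just cites Bhatt for that hypothesis.

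The gap is in your unpacking of Bhatt's input, which silently assumes $p\neq0$ in $A$. You use that $p$ is a nonzerodivisor in the domain $A$ for three things:
\begin{enumerate}
  \item to get $\dim A_P/p^nA_P=\dim A_P-1$;
  \item to lift systems of parameters of $A_P/p^nA_P$ to ones of the form $p,y_1,\dots,y_{d-1}$;
  \item in the fallback, to get exactness of $0\to A^+/p^{n-1}A^+\xrightarrow{p}A^+/p^nA^+\to A^+/pA^+\to0$.
\end{enumerate}
But the hypothesis $A/pA\neq0$ allows $\operatorname{char}A=p$. This case is not marginal: it is how the paper gets that $A^+$ itself is CCM. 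There $p=0$ in $A$, so $A/p^nA=A$ and $A^+/p^nA^+=A^+$. The dimension does not drop, no system of parameters can begin with $p$, and multiplication by $p$ is zero, so all three steps fail.

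\textbf{How to repair it.} Treat $\operatorname{char}A=p$ separately. There the needed statement is that every system of parameters of $A_P$ is regular on $(A_P)^+$ and $P(A_P)^+\neq(A_P)^+$. This is Hochster--Huneke, used exactly as in Corollary~\ref{cor:absolute-positive-characteristic}. Alternatively, use Bhatt's theorem directly in the form that every system of parameters of $A_P/p^nA_P$ is regular on $(A_P)^+/p^n(A_P)^+$, which needs no case split. When $p\neq0$ in $A$, your argument is fine.

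\textbf{A smaller point in the fallback.} Vanishing of higher Koszul homology on a non-finitely generated module does not by itself give regularity. Argue directly instead: a sequence regular on both outer terms of a short exact sequence is regular on the middle term. This follows from the snake lemma and induction on the length, using that a nonzerodivisor on the right-hand term keeps the reduced sequence exact.
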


\begin{proof}
  Set $B=A/p^nA$ and $R=A^+/p^nA^+$.
  The map $B\to R$ is integral, and by \cite{Bhatt2021}*{Theorem~1.1}, $R$ is a locally balanced big Cohen--Macaulay $B$-algebra.
  The conclusion follows from Theorem~\ref{thm:integral-transfer}.
\end{proof}

\section{Perfection and weak \texorpdfstring{$F$}{F}-nilpotence}\label{sec:perfect-closure}

In the previous section, we proved that a locally balanced big Cohen--Macaulay algebra integral over a
Noetherian ring is CCM, and hence locally HMCM\@. We now apply Theorem~\ref{thm:integral-transfer} to the
perfection in characteristic $p>0$. In this section, for a Noetherian ring $A$ of characteristic $p>0$, we
study the relationship between the Cohen--Macaulayness of its perfection $A_{\mathrm{perf}}$ and the weak
$F$-nilpotence of $A$.

\begin{defi}\label{def:F-nilpotent}
  Let $(A,\ideal{m})$ be a $d$-dimensional Noetherian local ring of characteristic $p>0$, and let $F$ denote the induced Frobenius action on each local cohomology module $H^i_{\ideal{m}}(A)$.
  We consider the Frobenius closure and tight closure of the zero submodule in local cohomology; namely, set
  \[
    0^F_{H^i_{\ideal{m}}(A)}
    :=\{\eta\in H^i_{\ideal{m}}(A)\mid F^e(\eta)=0
    \text{ for some }e\geq0\}.
  \]
  Let $A^\circ$ denote the complement of the union of the minimal primes of $A$, and set
  \[
    0^*_{H^d_{\ideal{m}}(A)}
    :=\{\eta\in H^d_{\ideal{m}}(A)\mid
    cF^e(\eta)=0\text{ for all }e\gg0
    \text{ for some }c\in A^\circ\}.
  \]
  \begin{enumerate}
    \item We say that $A$ is \emph{weakly $F$-nilpotent} if, for every $i<d$,
          \[
            H^i_{\ideal{m}}(A)=0^F_{H^i_{\ideal{m}}(A)},
          \]
          that is, if every element of $H^i_{\ideal{m}}(A)$ is annihilated by some iterate of $F$ for $i<d$.
    \item We say that $A$ is \emph{$F$-nilpotent} if $A$ is weakly $F$-nilpotent and, in addition,
          \[
            0^F_{H^d_{\ideal{m}}(A)}=0^*_{H^d_{\ideal{m}}(A)}.
          \]
          Equivalently, every element of $0^*_{H^d_{\ideal{m}}(A)}$ is annihilated by some iterate of $F$.
  \end{enumerate}
  A Noetherian ring $A$ is called \emph{locally weakly $F$-nilpotent} (resp. \emph{locally $F$-nilpotent}) if $A_P$ is weakly $F$-nilpotent (resp. $F$-nilpotent) for every $P\in\spec A$.
\end{defi}

Only weak $F$-nilpotence will be needed in this section. We first express the cohomology of the perfection
with support in terms of the Frobenius action on local cohomology.

Let $A$ be a ring of characteristic $p>0$. We write
\[
  A_{\mathrm{perf}}
  :=\varinjlim\bigl(A\xrightarrow{F}A\xrightarrow{F}\cdots\bigr)
\]
for the perfection of $A$. The following lemma relates the cohomology of $A_{\mathrm{perf}}$ to the Frobenius
action on the local cohomology of $A$.

\begin{lem}\label{lem:perfect-cohomology}
  Let $A$ be a Noetherian ring of characteristic $p>0$, let
  $\mathfrak p\in\spec A_{\mathrm{perf}}$, and set $P=\mathfrak p\cap A$.
  Then there is a natural isomorphism $(A_{\mathrm{perf}})_{\mathfrak p}\cong(A_P)_{\mathrm{perf}}$, and $\dim (A_{\mathrm{perf}})_{\mathfrak p}=\dim A_P$. Moreover, for every $i\geq0$, there is a natural isomorphism
  \[
    H^i_{\mathfrak p(A_{\mathrm{perf}})_{\mathfrak p}}
    \bigl(
    \spec (A_{\mathrm{perf}})_{\mathfrak p},
    \mathcal O
    \bigr)
    \cong
    \ilim
    \left(
    H^i_{PA_P}(A_P)
    \xrightarrow{F}
    H^i_{PA_P}(A_P)
    \xrightarrow{F}
    \cdots
    \right).
  \]
\end{lem}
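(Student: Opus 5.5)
The plan is to handle the localization statement first, then the dimension, and finally the cohomology. For the structure map $A\to A_{\mathrm{perf}}$, every element of $A_{\mathrm{perf}}$ has a $p^e$-th power in the image of $A$. Hence $A\to A_{\mathrm{perf}}$ is integral, and it is a universal homeomorphism on spectra: $\mathfrak p\mapsto \mathfrak p\cap A$ is a bijection, with inverse $P\mapsto\{x\mid x^{p^e}\in \text{image of }P\text{ for some }e\}$.

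\textbf{Localization.} Localization commutes with filtered colimits, and $F$ sends $A\setminus P$ into itself. So $(A_{\mathrm{perf}})\otimes_A A_P\cong (A_P)_{\mathrm{perf}}$. This ring is already local, because its spectrum is homeomorphic to $\spec A_P$, and its unique closed point lies over $PA_P$, which is $\mathfrak p$. Therefore localizing further at $\mathfrak p$ changes nothing, and we get $(A_{\mathrm{perf}})_{\mathfrak p}\cong(A_P)_{\mathrm{perf}}$.

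\textbf{Dimension.} The homeomorphism of spectra preserves chains of primes, which gives $\dim (A_{\mathrm{perf}})_{\mathfrak p}=\dim A_P$.

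\textbf{Cohomology.} We may now assume $A=A_P$ is local with maximal ideal $\mathfrak m$, and write $R=A_{\mathrm{perf}}$.
\begin{enumerate}
\item Choose generators $\underline{a}=a_1,\dots,a_n$ of $\mathfrak m$. By the radical computation above, $V(\underline{a}R)=\{\mathfrak mR\}$, where $\mathfrak mR$ is the maximal ideal of $R$, up to radical.
\item Proposition~\ref{prop:sheaf-cech} then gives $H^i_{\mathfrak mR}(\spec R,\mathcal O)\cong\check H^i(\underline a,R)$.
\item The \v{C}ech complex $\check C^\bullet(\underline a,-)$ is built from localizations, so it commutes with filtered colimits of modules. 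Taking cohomology also commutes with filtered colimits. Writing $R=\varinjlim(A\xrightarrow{F}A\to\cdots)$ as a colimit of $A$-modules, we obtain $\check H^i(\underline a,R)\cong\varinjlim \check H^i(\underline a,{}^{F}\!A)$, where ${}^F\!A$ denotes $A$ with $A$-module structure twisted by Frobenius.
\end{enumerate}

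The main obstacle is identifying the transition maps with the Frobenius action $F$ on $H^i_{\mathfrak m}(A)$. Viewing each term ${}^{F^e}\!A$ as an $A$-module via $F^e$ makes $\check H^i(\underline a, {}^{F^e}\!A)$ equal to $\check H^i(\underline a^{p^e},A)$ as abelian groups. Since $\underline a^{p^e}$ generates an ideal with the same radical, this is $H^i_{\mathfrak m}(A)$ (Noetherian, so \v{C}ech equals local cohomology). Under these identifications, the map induced by $F\colon A\to A$ is the map $\check C^\bullet(\underline a,A)\to \check C^\bullet(\underline a,A)$ given by $x\mapsto x^p$ on each localization $A_{a_{j_1}\cdots a_{j_k}}$. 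This is exactly the standard definition of the Frobenius action on local cohomology computed by the \v{C}ech complex.

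A cleaner way to avoid the twisting bookkeeping is the following. Regard the system as one of rings, $A\xrightarrow{F}A\xrightarrow{F}\cdots$. The \v{C}ech complex of $R$ on the images of $\underline a$ is the colimit of the \v{C}ech complexes $\check C^\bullet(\underline a^{(e)},A)$, where $\underline a^{(e)}$ is the preimage sequence in the $e$-th copy, i.e.\ just $\underline a$ itself, with the Frobenius maps between them. Since all the $\underline a$ generate $\mathfrak m$ up to radical, each term is $H^i_{\mathfrak m}(A)$, and the transition maps are $F$. Naturality in $A$ is clear from this construction.
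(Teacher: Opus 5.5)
Your proposal is correct and follows essentially the same route as the paper. The paper also uses the homeomorphism $\spec A_{\mathrm{perf}}\to\spec A$ and commutes localization with the colimit to get $(A_{\mathrm{perf}})_{\mathfrak p}\cong(A_P)_{\mathrm{perf}}$ and the dimension equality, then applies Proposition~\ref{prop:sheaf-cech} and commutes \v{C}ech cohomology with the filtered colimit, identifying each stage with $\check H^i(\underline a^{p^e},A_P)\cong H^i_{PA_P}(A_P)$ with Frobenius transitions; you use generators of $PA_P$ instead of a system of parameters, which changes nothing. One small slip in your ``cleaner way'' paragraph: the preimage of $\underline a$ in the $e$-th copy of $A$ is $\underline a^{p^e}$, not $\underline a$, but this is harmless since the \v{C}ech complex only sees the localizations, and your first identification via ${}^{F^e}\!A$ already handles it correctly.
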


\begin{proof}
  Each Frobenius map $F\colon A\to A$ induces the identity on the underlying topological space of $\spec A$, and
  \[
    \spec A_{\mathrm{perf}}
    \cong
    \plim
    \bigl(
    \spec A\xleftarrow{F}\spec A\xleftarrow{F}\cdots
    \bigr).
  \]
  Hence the natural map $\spec A_{\mathrm{perf}}\to\spec A$ is a homeomorphism. Since localization commutes
  with filtered colimits, we have
  \[
    (A_{\mathrm{perf}})_{\mathfrak p}
    \cong
    \ilim
    \bigl(
    A_P\xrightarrow{F}A_P\xrightarrow{F}\cdots
    \bigr)
    =
    (A_P)_{\mathrm{perf}}.
  \]
  Thus $\spec (A_{\mathrm{perf}})_{\mathfrak p}\to\spec A_P$ is also a homeomorphism, and therefore $\dim
    (A_{\mathrm{perf}})_{\mathfrak p}=\dim A_P$.

  Set $d=\dim A_P$, and let $\underline{a}=a_1,\dots,a_d$ be a system of parameters of $A_P$. By the above
  homeomorphism, the only prime ideal of $(A_{\mathrm{perf}})_{\mathfrak p}$ containing
  $\underline{a}(A_{\mathrm{perf}})_{\mathfrak p}$ is $\mathfrak p(A_{\mathrm{perf}})_{\mathfrak p}$. Hence
  $\sqrt{\underline{a}(A_{\mathrm{perf}})_{\mathfrak p}} = \mathfrak p(A_{\mathrm{perf}})_{\mathfrak p}$.

  By Proposition~\ref{prop:sheaf-cech} and the exactness of filtered colimits, for every $i$ we obtain
  \[
    H^i_{\mathfrak p(A_{\mathrm{perf}})_{\mathfrak p}}
    \left(
    \spec (A_{\mathrm{perf}})_{\mathfrak p},
    \mathcal O
    \right)
    \cong
    \check{H}^i\left(\underline{a},(A_{\mathrm{perf}})_{\mathfrak p}\right)
    \cong
    \ilim
    \check{H}^i\left(a_1^{p^e},\dots,a_d^{p^e},A_P\right).
  \]
  At each stage, $\check{H}^i(a_1^{p^e},\dots,a_d^{p^e},A_P) \cong H^i_{PA_P}(A_P),$ and the transition maps
  are induced by Frobenius. Thus we obtain the desired isomorphism
  \[
    H^i_{\mathfrak p(A_{\mathrm{perf}})_{\mathfrak p}}
    \bigl(
    \spec (A_{\mathrm{perf}})_{\mathfrak p},
    \mathcal O
    \bigr)
    \cong
    \ilim
    \left(
    H^i_{PA_P}(A_P)
    \xrightarrow{F}
    H^i_{PA_P}(A_P)
    \xrightarrow{F}
    \cdots
    \right).
  \]
\end{proof}

\begin{thm}\label{thm:perfect-characterization}
  Let $A$ be a Noetherian ring of characteristic $p>0$.
  The following conditions are equivalent.
  \begin{enumerate}
    \item $A$ is locally weakly $F$-nilpotent.
    \item $A_{\mathrm{perf}}$ is a locally balanced big Cohen--Macaulay $A$-algebra.
    \item $A_{\mathrm{perf}}$ is CCM\@.
  \end{enumerate}
  If these conditions hold, then $A_{\mathrm{perf}}$ is locally HMCM\@.
\end{thm}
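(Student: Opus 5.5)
We will prove the cycle (1)$\Rightarrow$(2)$\Rightarrow$(3)$\Rightarrow$(1), using Lemma~\ref{lem:perfect-cohomology} as the bridge. The final assertion then follows from Theorem~\ref{thm:ccm-to-locally-hmcm}, since (3) makes $A_{\mathrm{perf}}$ a locally finite-dimensional CCM ring.

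For (2)$\Rightarrow$(3), note that $A\to A_{\mathrm{perf}}$ is integral: each element of $A_{\mathrm{perf}}$ is represented by some $x$ at stage $e$, and it is a root of $T^{p^e}-x$, where $x$ is viewed at stage $0$. Theorem~\ref{thm:integral-transfer} then applies directly.

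For (3)$\Rightarrow$(1), fix $P\in\spec A$. Since $\spec A_{\mathrm{perf}}\to\spec A$ is a homeomorphism, there is a unique $\mathfrak p$ over $P$. Lemma~\ref{lem:perfect-cohomology} gives $\dim(A_{\mathrm{perf}})_{\mathfrak p}=d:=\dim A_P$. CCM then yields
\[
\ilim\bigl(H^i_{PA_P}(A_P)\xrightarrow{F}H^i_{PA_P}(A_P)\xrightarrow{F}\cdots\bigr)=0\qquad(i<d).
\]
An element of the first term maps to zero in a filtered colimit exactly when some iterate $F^e$ kills it. Hence every element of $H^i_{PA_P}(A_P)$ is $F$-nilpotent for $i<d$, which is weak $F$-nilpotence of $A_P$.

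For (1)$\Rightarrow$(2), fix $P$ and write $d=\dim A_P$. By Lemma~\ref{lem:perfect-cohomology}, $(A_{\mathrm{perf}})_P=(A_P)_{\mathrm{perf}}$; here localizing at $P\subset A$ agrees with localizing at the unique $\mathfrak p$, because the primes of $A_{\mathrm{perf}}$ avoiding $A\setminus P$ are exactly those over primes inside $P$. Let $\underline a$ be a system of parameters of $A_P$ and set $S=(A_P)_{\mathrm{perf}}$. We must show two things.
\begin{enumerate}
\item $S/PS\neq0$. This holds because $P$ has a prime of $S$ lying over it, or simply because $A_P\to S$ is integral and injective modulo nilpotents.
\item $\underline a$ is $S$-regular.
\end{enumerate}
Weak $F$-nilpotence together with the colimit formula gives $\check H^i(\underline a,S)=0$ for $i<d$. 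Since $S$ is a filtered colimit of Noetherian $A_P$-modules, the Koszul homology is $H_j(\underline a;S)=\ilim H_j(\underline a^{p^e};A_P)$. The plan is to convert the vanishing of lower \v{C}ech cohomology into regularity. For this I would invoke the local equivalence of Ma--Polstra \cite{Ma-Polstra2025}*{Proposition~12.22}, which says a Noetherian local ring of characteristic $p$ is weakly $F$-nilpotent if and only if its perfection is balanced big Cohen--Macaulay. Applying it to each $A_P$ immediately gives (2).

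The main obstacle is this last step if one wants to avoid quoting Ma--Polstra. The direct route would argue by induction on $d$: the vanishing of $\check H^0$ and $\check H^1,\dots$ in low degrees shows $a_1$ is a nonzerodivisor on the perfect ring $S$, then pass to $S/a_1S$. However, $S/a_1S$ is no longer perfect, so the induction does not close naively. I would therefore rely on the cited local result, checking only that its hypotheses (local, Noetherian, characteristic $p$) match ours at each $A_P$.
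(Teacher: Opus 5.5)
Your proposal is correct and follows the paper's proof essentially step for step. The paper likewise gets (2)$\Rightarrow$(3) from integrality of $A\to A_{\mathrm{perf}}$ and Theorem~\ref{thm:integral-transfer}, gets (3)$\Rightarrow$(1) from Lemma~\ref{lem:perfect-cohomology} together with the description of elements dying in a filtered colimit, gets (1)$\Leftrightarrow$(2) by applying Ma--Polstra's Proposition~12.22 to each $A_P$ via $(A_{\mathrm{perf}})_P\cong(A_P)_{\mathrm{perf}}$, and gets the final assertion from Theorem~\ref{thm:ccm-to-locally-hmcm}; the only differences are cosmetic (you arrange it as a cycle, so you need only one direction of Ma--Polstra, and you spell out the integrality and the fact that every $P\in\spec A$ is hit).
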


\begin{proof}
  \textup{(1)}$\Longleftrightarrow$\textup{(2)}.
  For any $P\in\spec A$ and the unique $\mathfrak p\in\spec A_{\mathrm{perf}}$ lying over $P$, the definition of the perfection and Lemma~\ref{lem:perfect-cohomology} give
  \[
    (A_{\mathrm{perf}})_P\cong(A_{\mathrm{perf}})_{\mathfrak p}
    \cong(A_P)_{\mathrm{perf}}.
  \]
  By Ma--Polstra \cite{Ma-Polstra2025}*{Proposition~12.22}, a Noetherian local ring is weakly $F$-nilpotent if
  and only if its perfection is a balanced big Cohen--Macaulay algebra over that ring. Applying this
  characterization to every localization yields the equivalence with $A_{\mathrm{perf}}$ being a locally
  balanced big Cohen--Macaulay $A$-algebra.

  \textup{(2)}$\Longrightarrow$\textup{(3)} follows from Theorem~\ref{thm:integral-transfer}, since $A_{\mathrm{perf}}$ is integral over $A$.

  \textup{(3)}$\Longrightarrow$\textup{(1)}.
  Let $\mathfrak p\in\spec A_{\mathrm{perf}}$, set $P=\mathfrak p\cap A$, and let $d=\dim A_P$.
  For $i<d$, Lemma~\ref{lem:perfect-cohomology} and the CCM property of $(A_{\mathrm{perf}})_{\mathfrak p}$ give
  \[
    \varinjlim_e
    \left(
    H^i_{PA_P}(A_P)\xrightarrow{F}H^i_{PA_P}(A_P)\xrightarrow{F}\cdots
    \right)=0.
  \]
  The vanishing of this direct limit is equivalent to every element being annihilated by some iterate of
  Frobenius. Hence each $A_P$ is weakly $F$-nilpotent.

  Finally, $A_{\mathrm{perf}}$ is locally finite-dimensional, so under \textup{(3)} we may apply
  Theorem~\ref{thm:ccm-to-locally-hmcm}.
\end{proof}

\begin{cor}\label{cor:perfect-dimension-one}
  Let $A$ be a Noetherian ring of characteristic $p>0$ with $\dim A\leq1$.
  Then $A_{\mathrm{perf}}$ is CCM and locally HMCM\@.
\end{cor}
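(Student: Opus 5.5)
By Theorem~\ref{thm:perfect-characterization}, it suffices to verify condition \textup{(1)}: $A$ is locally weakly $F$-nilpotent. Alternatively, we can check \textup{(3)} directly via Lemma~\ref{lem:perfect-cohomology}. Either way, the task reduces to a statement about local cohomology of the Noetherian local rings $A_P$. For every $P\in\spec A$, we have $d\coloneq\dim A_P\leq\dim A\leq 1$. So we must show that every element of $H^i_{PA_P}(A_P)$ is killed by some iterate of $F$ for $i<d$.

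We split into cases according to $d$. If $d=0$, there is no $i<d$, so the condition is vacuous. If $d=1$, the only case is $i=0$, and $H^0_{PA_P}(A_P)=\Gamma_{PA_P}(A_P)$ is the ideal of elements of $A_P$ annihilated by a power of $PA_P$. On $H^0$, the Frobenius action is just the ring Frobenius $x\mapsto x^p$. So we need each $x\in\Gamma_{PA_P}(A_P)$ to be nilpotent.

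To see this, let $x\in\Gamma_{PA_P}(A_P)$, so $(PA_P)^n x=0$ for some $n$. Let $Q$ be any minimal prime of $A_P$. Since $d=1$, we have $Q\neq PA_P$, so pick $s\in PA_P\setminus Q$. Then $s^n x=0$, and localizing at $Q$ gives $x/1=0$ in $(A_P)_Q$. Hence $x\in\ker(A_P\to (A_P)_Q)$, which is contained in $Q$. Thus $x$ lies in every minimal prime, hence in the nilradical, so $x^{p^e}=F^e(x)=0$ for $e\gg0$. Therefore $A_P$ is weakly $F$-nilpotent.

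Having established \textup{(1)}, Theorem~\ref{thm:perfect-characterization} yields that $A_{\mathrm{perf}}$ is CCM and locally HMCM; the latter also follows from Theorem~\ref{thm:ccm-to-locally-hmcm}. The only real step is the $d=1$, $i=0$ case: identifying the Frobenius on $H^0$ with the $p$-th power map and showing that $PA_P$-torsion is nilpotent. I expect no serious obstacle. One could also argue directly through Lemma~\ref{lem:perfect-cohomology}, since the colimit of $H^0$ along $F$ vanishes exactly when the torsion is nilpotent.
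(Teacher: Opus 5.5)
Your proposal is correct and follows essentially the paper's route: reduce via Theorem~\ref{thm:perfect-characterization} to weak $F$-nilpotence of each $A_P$, note that the case $\dim A_P=0$ is vacuous, and show that $PA_P$-torsion in $A_P$ is nilpotent when $\dim A_P=1$. The only difference is in that last step, where you show $x$ lies in every minimal prime of $A_P$; the paper argues more directly that $x$ cannot be a unit (else $(PA_P)^n=0$), so $x\in PA_P$ and $x^{n+1}=x^n\cdot x\in(PA_P)^n x=0$, and both arguments are valid.
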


\begin{proof}
  Let $\mathfrak p\in\spec A_{\mathrm{perf}}$ and set $P=\mathfrak p\cap A$.
  Since $\dim A_P\leq1$, it suffices by Theorem~\ref{thm:perfect-characterization} to show that $A_P$ is weakly $F$-nilpotent.

  If $\dim A_P=0$, the condition for weak $F$-nilpotence is vacuous. Suppose that $\dim A_P=1$, and let $x\in
    H^0_{PA_P}(A_P)$. Then $(PA_P)^n x=0$ for some $n$. If $x$ were a unit, then $(PA_P)^n=0$, contradicting
  $\dim A_P=1$. Hence $x\in PA_P$, so $x^{n+1}=0$. Therefore $F^e(x)=0$ for all sufficiently large $e$, and
  $A_P$ is weakly $F$-nilpotent.

  By Lemma~\ref{lem:perfect-cohomology}, the natural map $\spec A_{\mathrm{perf}}\to\spec A$ is a
  homeomorphism, so $P$ ranges over all prime ideals of $A$. Thus $A$ is locally weakly $F$-nilpotent.
  Theorem~\ref{thm:perfect-characterization} now shows that $A_{\mathrm{perf}}$ is CCM and locally HMCM\@.
\end{proof}

\subsection*{Use of AI}
The author used ChatGPT (OpenAI) for English translation, language editing, proofreading, and checking the
exposition for possible inconsistencies. The author carefully reviewed and manually modified the AI-generated
outputs, and takes full responsibility for the final contents.

\end{document}